\newtheorem{thm}{Theorem}[section]
\newtheorem{lem}[thm]{Lemma}
\newtheorem{prop}[thm]{Proposition}
\newtheorem{cor}[thm]{Corollary}
\newtheorem{example}[thm]{Example}
\theoremstyle{definition}
\newtheorem{definition}[thm]{Definition}
\newtheorem{remark}[thm]{Remark}
\newtheorem{question}[thm]{Question}
\newcommand{\ZZ}{\mathbb{Z}}      % for Integers
\newcommand{\QQ}{\mathbb{Q}}
\begin{document}

\title{Using rational homology circles to construct rational homology balls}
\author{Jonathan Simone}

\begin{abstract}
Motivated by Akbulut-Larson's construction of Brieskorn spheres bounding rational homology 4-balls, we explore plumbed 3-manifolds that bound rational homology circles and use them to construct infinite families of rational homology 3-spheres that bound rational homology 4-balls. Some of these rational homology 3-spheres are new examples of integer homology 3-spheres that bound rational homology 4-balls, but do not bound integer homology 4-balls (i.e. nontrivial elements of $\text{ker}(\Theta_{\ZZ}^3\to\Theta_{\QQ}^3)$). In particular, we find infinite families of torus bundles over the circle that bound rational homology circles, provide a simple method for constructing more general plumbed 3-manifolds that bound rational homology circles, and show that, for example, $-1$-surgery along any unknotting number one knot $K$ with a positive crossing that can be switched to unknot $K$ bounds a rational homology 4-ball.
\end{abstract}

\maketitle 

\section{Introduction}\label{intro}

Understanding which rational homology 3-spheres ($\QQ S^3s$) bound rational homology 4-balls ($\QQ B^4s$) is a widely explored open question among Kirby's list of problems (Problem 4.5 in \cite{kirbyproblemlist}). Certain classifications of $\QQ S^3s$ bounding $\QQ B^4s$ do exist (e.g., lens spaces \cite{liscalensspace}, certain small Seifert fibered spaces \cite{lecuonamontesinosknots}, some Dehn surgeries on knots \cite{acetogolla}, and some Brieskorn spheres \cite{akbulutlarson}, \cite{fickle}, \cite{cassonharer}), but the question at large is far from resolved. In \cite{akbulutlarson}, Akbulut-Larson used the fact that $0$-surgery on the figure-eight knot bounds a rational homology circle ($\QQ S^1\times B^3$) (\cite{fintushelsternmuinvt}) to construct infinite families of Brieskorn spheres that bound $\QQ B^4s$, and in \cite{savkballs}, \c{S}avk produced even more examples. Their constructions rely in part on the following lemma, which Akbulut-Larson proved for the case of $0$-surgery on the figure-eight knot (and, more generally, rationally slice knots). The lemma presented below is a more general version.

%\begin{lem}[Akbulut-Larson \cite{akbulutlarson}] Let $K$ be the figure-eight knot and let $Y$ be obtained by integral surgery along a knot in $S^3_0(K)$. If $Y$ is a $\ZZ S^3$, then it bounds a $\QQ B^4$.\label{allemma}\end{lem}

\begin{lem} Let $Y$ bound a $\mathbb{Q}S^1\times B^3$, $W$, and let $K$ be a knot in $Y$ such that $[K]$ has infinite order in $H_1(Y;\mathbb{Z})$. Then the 4-manifold obtained by attaching a 2-handle to $W$ along $K$ is a $\mathbb{Q}B^4$. Consequently, any integer surgery along $K$ yields a $\QQ S^3$ that bounds a $\QQ B^4$. \label{rationalballobstruction}\end{lem}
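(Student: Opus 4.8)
The plan is to let $X = W \cup_K h$ denote the 4-manifold obtained by attaching a 2-handle $h = D^2 \times D^2$ to $W$ along a tubular neighborhood of $K \subset \partial W = Y$, and to prove directly that $H_i(X;\QQ) = 0$ for all $i \geq 1$, i.e. that $X$ is a $\QQ B^4$. Everything will reduce to a single connecting-homomorphism computation in the long exact sequence of the pair $(X, W)$ with rational coefficients.

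First I would compute the relative homology. By excision, $H_*(X, W) \cong H_*(D^2 \times D^2, \partial D^2 \times D^2)$, which is $\ZZ$ concentrated in degree $2$, generated by the core disk $D^2 \times \{0\}$ rel its boundary. Feeding this into the long exact sequence of $(X, W)$ with $\QQ$-coefficients, and using $H_2(W;\QQ) = 0$, $H_1(W;\QQ) = \QQ$, and $H_k(W;\QQ) = 0$ for $k \geq 2$ (as $W$ is a $\QQ S^1 \times B^3$), the sequence collapses to
\[
0 \to H_2(X;\QQ) \to \QQ \xrightarrow{\ \partial\ } H_1(W;\QQ) \to H_1(X;\QQ) \to 0,
\]
with $H_i(X;\QQ) = 0$ forced in degrees $i = 3, 4$. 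The connecting map $\partial$ sends the core to the class of its boundary, namely $i_*[K] \in H_1(W;\QQ)$, where $i : Y \hookrightarrow W$ is the inclusion. Thus the whole statement comes down to showing $i_*[K] \neq 0$.

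The key step, and the place where the hypotheses are used essentially, is to show that the inclusion induces an isomorphism $i_* : H_1(Y;\QQ) \xrightarrow{\cong} H_1(W;\QQ) \cong \QQ$. I would obtain this from Poincaré--Lefschetz duality: $H_k(W, Y;\QQ) \cong H^{4-k}(W;\QQ)$, and since $W$ has the rational cohomology of $S^1$, the groups $H_1(W, Y;\QQ) \cong H^3(W;\QQ)$ and $H_2(W, Y;\QQ) \cong H^2(W;\QQ)$ both vanish, so the long exact sequence of $(W, Y)$ pinches down to the claimed isomorphism. Granting this, the hypothesis that $[K]$ has infinite order in $H_1(Y;\ZZ)$ says exactly that $[K] \neq 0$ in $H_1(Y;\QQ)$ (tensoring with $\QQ$ kills only torsion), whence $i_*[K] \neq 0$ in $H_1(W;\QQ)$. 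Therefore $\partial : \QQ \to \QQ$ is nonzero, hence an isomorphism, forcing $H_2(X;\QQ) = H_1(X;\QQ) = 0$; together with the vanishing in degrees $3, 4$ this shows $X$ is a $\QQ B^4$.

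Finally, for the ``consequently'' clause: attaching the 2-handle with any integer framing realizes integer surgery on $K$ as $\partial X$, and the homology computation above is insensitive to that framing (it depends only on the class $[K]$), which is precisely why \emph{every} integer surgery on $K$ arises as the boundary of such a $\QQ B^4$. It then suffices to note that the boundary of any $\QQ B^4$ is automatically a $\QQ S^3$: once $H_1(X;\QQ) = H_2(X;\QQ) = 0$, another application of Poincaré--Lefschetz duality and the half-lives-half-dies principle squeezes $H_1(\partial X;\QQ)$ to zero in the long exact sequence of $(X, \partial X)$. I expect the main obstacle to be the duality argument that $i_*$ is an isomorphism, since that is where the precise meaning of ``$\QQ S^1 \times B^3$'' is leveraged --- namely that the lone rational $1$-cycle of $W$ is detected on the boundary --- while the remaining homological bookkeeping is routine.
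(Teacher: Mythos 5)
Your proof is correct and takes essentially the same approach as the paper's: both arguments reduce to showing that $[K]$ maps to a rationally nontrivial class in $H_1(W;\QQ)$, and then conclude by routine exact-sequence bookkeeping (your long exact sequence of $(X,W)$ plus excision is interchangeable with the paper's Mayer--Vietoris sequence for the decomposition $W\cup h$). The only cosmetic difference is that you justify the key step explicitly via Poincar\'e--Lefschetz duality for $(W,Y)$, whereas the paper phrases it integrally as the assertion that $H_2(W,Y)$ is a torsion group --- which is the same duality fact in disguise.
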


%\begin{proof} Let $B$ be the 4-manifold obtained by attaching a 2-handle $h$ to $W$ along $K$. Since $H_2(Y,C)$ is a torsion group, $[K]\in H_1(Y)$ must map to an infinite order element $m\in H_1(C)$ under the map induced by inclusion. $K$ also represents a generator of $H_1(C\cap h)=H_1(S^1\times D^2)=\ZZ$ that maps to $m\in H_1(C)$ under the map induced by inclusion ($C\cap h$ is the attaching region of $h$ in $Y$). Now by considering the Mayer-Vietoris sequence for $(B,C, h)$, it is easy to see that $B$ must have the rational homology of the 4-ball. 
%\end{proof}

The upshot of this construction is that a single $\QQ S^1\times B^3$ can be used to construct infinite families of $\QQ S^3s$ bounding $\QQ B^4s$, as was the case in the Akbulut-Larson paper. For example, using the notation of Lemma \ref{rationalballobstruction}, we can add a 2-handle to $W$ along $K$ with any integer framing, yielding an infinite family of $\QQ S^3s$ bounding $\QQ B^4s$. Moreover, $K$ can be any knot having infinite order in $H_1(Y;\ZZ)$; each such knot provide an infinite family of $\QQ S^3s$ bounding $\QQ B^4s$, as above. 

The more specific problem of finding integer homology spheres ($\ZZ S^3s$) that bound $\QQ B^4s$ but do not bound integer homology balls ($\ZZ B^4s$) is also of significant interest. The first such example, discovered by Fintushel-Stern in \cite{fintushelsternmuinvt}, was the Brieskorn sphere $\Sigma(2,3,7)$. %In particular, they showed that $\Sigma(2,3,7)$ bounds a $\QQ B^4$ and has nontrivial Rohlin invariant, obstructing the existence of an $\ZZ B^4$ bounded by $\Sigma(2,3,7)$. 
This gave the first nontrivial element in the kernel of the natural inclusion homomorphism $\Theta_{\ZZ}^3\to\Theta_{\QQ}^3$ from the integer homology cobordism group of $\ZZ S^3s$ to the rational homology cobordism group of $\QQ S^3s$. In the recent papers of Akbulut-Larson \cite{akbulutlarson} and \c{S}avk \cite{savkballs} mentioned above, infinitely many new nontrivial elements in $\text{ker}(\Theta_{\ZZ}^3\to\Theta_{\QQ}^3)$ were constructed. Using the same $\QQ S^1\times B^3$ that they used (which was bounded by 0-surgery on the figure-eight knot), we construct an infinite family of integer surgeries along knots that bound $\QQ B^4s$ and show that infinitely many of them are nontrivial elements in $\text{ker}(\Theta_{\ZZ}^3\to\Theta_{\QQ}^3)$.

\begin{thm}
	%$-1$-surgery (resp. $1$-surgery) along any twisted positively-clasped (resp. negatively-clasped) Whitehead double of any knot in $S^3$ bounds a $\QQ B^4$ (Figure \ref{bounds1}).
If $K$ is an unknotting number one knot with a positive (resp. negative) crossing that can be switched to unknot $K$, then $S^3_{-1}(K)$ (resp. $S^3_1(K)$) bounds a $\QQ B^4$. In particular, if $K_n$ is  the twist knot shown in Figure \ref{twist} with $n$ odd, then $S^3_{-1}(K_n)$ bounds a $\QQ B^4$, but does not bound a $\ZZ B^4$, giving infinitely many nontrivial elements in $\text{ker}(\Theta_{\ZZ}^3\to\Theta_{\QQ}^3)$.
\label{thm1}
\end{thm}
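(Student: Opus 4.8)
The plan is to prove this in two logically separate pieces. The first sentence is a positive statement (a $\QQ B^4$ exists) and the second is an obstruction statement (no $\ZZ B^4$ exists). I would attack the positive part by reducing it to Lemma~\ref{rationalballobstruction}, and the negative part by invoking a known homology-cobordism invariant such as the Rokhlin invariant or a $d$-invariant computation.

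\textbf{The positive direction.} The whole point of Lemma~\ref{rationalballobstruction} is that if $Y$ bounds a $\QQ S^1\times B^3$ and $K\subset Y$ has infinite order in $H_1(Y;\ZZ)$, then any integer surgery on $K$ bounds a $\QQ B^4$. So the task is to realize $S^3_{-1}(K)$ (for $K$ an unknotting number one knot with a positive crossing that unknots $K$) as integer surgery on a knot sitting inside a manifold that bounds a $\QQ S^1\times B^3$. The key observation is that switching a single positive crossing of $K$ to obtain the unknot is, in the standard Kirby-calculus dictionary, the same as saying $K$ is obtained from the unknot by a crossing change, and a crossing change can be encoded as $\pm 1$-surgery on a small unknot $c$ linking the two strands at the crossing. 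Concretely, I would present $S^3_{-1}(K)$ as surgery on a two-component link: the knot $K$ with framing $-1$ together with a crossing-change circle $c$ with framing $\pm 1$; blowing down $c$ relates this to the unknot. Since the crossing is positive and can be switched to unknot $K$, blowing down with the appropriate sign turns the $K$-component into an unknot, and the resulting manifold is $0$-surgery on a knot (or on the unknot) whose structure I would identify with the figure-eight $0$-surgery's role in Akbulut-Larson, i.e.\ a manifold bounding a $\QQ S^1\times B^3$. The knot $K$, viewed inside that bounding $3$-manifold, must be shown to carry infinite order in first homology; this is automatic because the $0$-framed component becomes a generator of the $\ZZ$ homology and the linking number bookkeeping places $K$ as a nonzero multiple of that generator. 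Applying Lemma~\ref{rationalballobstruction} then yields the $\QQ B^4$.

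\textbf{The explicit twist-knot family.} For the twist knots $K_n$ in Figure~\ref{twist}, I would verify directly that each $K_n$ has unknotting number one with a positive crossing whose switch unknots it, so the general statement applies and $S^3_{-1}(K_n)$ bounds a $\QQ B^4$. With $n$ odd, I also need $S^3_{-1}(K_n)$ to be an \emph{integer} homology sphere: since $-1$-surgery on any knot in $S^3$ yields a $\ZZ S^3$ (the surgery coefficient is $\pm 1$), this is immediate and holds for all $n$, with the parity of $n$ reserved for the obstruction step.

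\textbf{The obstruction (no $\ZZ B^4$).} This is where I expect the real work. To show $S^3_{-1}(K_n)$ is a nontrivial element of $\ker(\Theta_\ZZ^3\to\Theta_\QQ^3)$, it suffices to show it does not bound a $\ZZ B^4$, i.e.\ it is nontrivial in $\Theta_\ZZ^3$, and moreover that infinitely many $n$ give distinct or at least nonzero classes. The cleanest obstruction to bounding a $\ZZ B^4$ is the Rokhlin invariant $\mu\in\ZZ/2$: if $\mu(S^3_{-1}(K_n))\ne 0$ then it cannot bound a spin (hence integer homology) $4$-ball. For $-1$-surgery the Rokhlin invariant is computed from the Arf invariant of $K_n$ via $\mu(S^3_{-1}(K_n))=\mathrm{Arf}(K_n)$, so I would compute $\mathrm{Arf}(K_n)$ and show it equals $1$ precisely when $n$ is odd. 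If the Rokhlin invariant turns out to vanish on the relevant family, the fallback is the Heegaard Floer correction term $d(S^3_{-1}(K_n))$, or the Fintushel--Stern/Neumann--Siebenmann $\bar\mu$ invariant, which already detected $\Sigma(2,3,7)$; one shows these are nonzero and take infinitely many values as $n$ ranges over odd integers, forcing infinitely many distinct nonzero classes. Establishing that infinitely many of the classes are genuinely distinct (not merely nonzero) is the subtle point, and I would lean on the monotonic growth of a $\QQ$-valued invariant such as $d$ along the family to separate them.
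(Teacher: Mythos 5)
Your positive direction is where the genuine gap lies. The opening move---encoding the unknotting crossing change as $\pm1$-surgery on a crossing circle $c$---is also the paper's first step (it reduces part (a) of Theorem \ref{thmcombined} to part (b)), but two things go wrong after that. First, your presentation is inverted: surgery on the link consisting of \emph{the knot $K$} with framing $-1$ together with $c$ with framing $\pm1$, where blowing down $c$ unknots the $K$-component, is diffeomorphic to $S^3$ (blow down $c$; what remains is $-1$-surgery on an unknot), not to $S^3_{-1}(K)$. To present $S^3_{-1}(K)$ one must draw the component \emph{already unknotted} (i.e.\ $K$ with the crossing switched) and let blowing down $c$ restore the crossing; choosing $c$ anti-parallel to the strands, so that $lk(c,K)=0$, this exhibits $S^3_{-1}(K)$ as $(-1,+1)$-surgery on a two-component link of unknots with linking number zero. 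Second, and more seriously, the step you phrase as ``I would identify [the result] with the figure-eight $0$-surgery's role'' is precisely the content that must be proved, and you give no argument for it. Note that $S^3_{-1}(K)$ is a $\ZZ S^3$, so it is certainly not $0$-surgery on any knot in $S^3$; what must be shown is that it is $0$-surgery on a knot of \emph{infinite order in homology} inside the torus bundle $\textbf{T}_{A(3)}$ (which is $0$-surgery on the figure-eight knot), so that Theorem \ref{hypthm} and Lemma \ref{rationalballobstruction} apply. Neither component of your link can serve as the ambient manifold: each is a $\pm1$-framed unknot, surgery on which gives $S^3$, a $\QQ S^3$ that cannot bound a $\QQ S^1\times B^3$; correspondingly, the ``$0$-framed component'' your homology bookkeeping appeals to does not exist in your diagram. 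In the paper this identification is carried out by a genuinely nontrivial chain of Kirby moves (Figure \ref{2comps2}): a cancelling pair of $0$-framed unknots is introduced, one of them running parallel to an arc of the arbitrarily embedded unknotted component; that component is slid over the new unknot twice, and then over the meridian $t$ times to absorb the writhe; only after all of this does the diagram split as the standard surgery diagram of $\textbf{T}_{A(3)}$ union a $0$-framed curve, at which point Lemma \ref{rationalballobstruction} applies. Without this identification, or a substitute for it, the positive half of the theorem is unproved.

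Your obstruction argument, by contrast, is correct and is essentially the paper's. Proposition \ref{kernelprop} also uses the Rokhlin invariant, exhibiting by Kirby calculus an even $2$-handlebody of signature $8$ bounded by $S^3_{-1}(K_n)$ for odd $n$; your route via the classical identity $\mu(S^3_{\pm1}(K))=\mathrm{Arf}(K)$ together with the computation $\mathrm{Arf}(K_n)=1$ for $n$ odd is a legitimate shortcut to the same conclusion (it should be checked against the paper's conventions for $K_n$, but the parity statement matches). One caution on your final point: proving that infinitely many of the classes are pairwise \emph{distinct} in $\Theta_{\ZZ}^3$, e.g.\ via $d$-invariants, is neither needed for nor claimed by the paper, which shows only that each $S^3_{-1}(K_n)$ with $n$ odd is a nontrivial element of $\text{ker}(\Theta_{\ZZ}^3\to\Theta_{\QQ}^3)$ and explicitly leaves independence questions open.
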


\begin{figure}
	\centering
	\includegraphics[scale=.65]{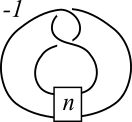}
	\caption{Let $K_n$ be the twist knot depicted, where the box labeled $n$ indicates $n$ full twists. If $n$ is odd, then $S^3_{-1}(K_n)$ bounds a $\QQ B^4$ but not an $\ZZ B^4$}
	\label{twist}
\end{figure}

\begin{remark} It follows from Theorem \ref{thm1} that $-1$-surgery (resp. $1$-surgery) along any twisted positively-clasped (resp. negatively-clasped) Whitehead double of any knot in $S^3$ bounds a $\QQ B^4$. \end{remark}

\begin{remark} All that is known about the structure of $\text{ker}(\Theta_{\ZZ}^3\to\Theta_{\QQ}^3)$ is that it contains a subgroup isomorphic to $\ZZ$. In particular, it is unknown whether some subset of  the examples of Akbulut-Larson, \c{S}avk, or Theorem \ref{thm1} give rise to linearly independent elements in $\text{ker}(\Theta_{\ZZ}^3\to\Theta_{\QQ}^3)$. An affirmative answer would show that the kernel is larger that $\ZZ$.\end{remark}

As mentioned above, a single $\QQ S^1\times B^3$ can be used to construct infinitely many $\QQ S^3s$ bounding $\QQ B^4s$. Hence finding simple 3-manifolds that bound $\QQ S^1\times B^3s$ is a worthwhile pursuit. Since the boundaries of $\QQ S^1\times B^3s$ have the rational homology of $S^1\times S^2$ ($\QQ S^1\times S^2$), it is natural to ask:

\begin{question} Which $\QQ S^1\times S^2s$ bound $\QQ S^1\times B^3s$? \end{question}

The first class of $\QQ S^1\times S^2s$ that one might consider is the set of $0$-surgeries along knots in $S^3$ (which are $\ZZ S^1\times S^2s$). Classifying which knots admit $0$-surgeries that bound $\QQ S^1\times B^3s$ is equivalent to classifying rationally slice knots (\cite{cochranetal}). There are many known families of rationally slice knots: strongly negative amphichiral knots (including the figure-eight knot) (\cite{kawauchinegamph}), Miyazaki knots (\cite{kimwurationalslice}), and $(p,1)$-cables of rationally slice knots for $p>0$ (\cite{cochranetal}). 

%Therefore, the figure-eight knot is an example of a rationally slice knot and, more generally, strongly negative amphichiral knots are known to be rationally slice (\cite{kawauchinegamph}). Moreover, some negative amphichiral knots in \cite{kimwurationalslice}.

We will instead focus on plumbed 3-manifolds---the boundaries of plumbings of $D^2$-bundles over $S^2$---whose associated weighted graphs have a single cycle. Each edge of the cycle must be decorated with either $``+"$ or $``-"$ to specify the sign of the intersection of the (oriented) base spheres. By changing the orientations of the base spheres and fibers of select disk bundles, it can be arranged that either all edges of the cycle are decorated with ``+" or all but one edge are decorated with ``+." If the cycle can be decorated with only ``+" signs, we say the plumbing is \textit{positive}. Otherwise, we say it is \textit{negative}.

We first focus on \textit{cyclic} plumbings, whose associated graphs is a cycle, as in Figure \ref{cyclic}. The boundaries of such plumbings are $T^2$-bundles over $S^1$ (c.f. \cite{neumann}). After endowing $T^2\times[0,1]=\mathbb{R}^2/\mathbb{Z}^2\times[0,1]$ with the coordinates $(\textbf{x},t)=(x,y,t)$, any $T^2$-bundle over $S^1$ is of the form $T^2\times[0,1]/(\textbf{x},1)\sim(\pm A\textbf{x},0)$, where $A\in SL(2,\ZZ)$, which is well-defined up to conjugation. The matrix $A$ is called the \textit{monodromy} of the torus bundle. A torus bundle is called \textit{elliptic} if $|\text{tr}A|<2$, \textit{parabolic} if $|\text{tr}A|=2$, and \textit{hyperbolic} if $|\text{tr}A|>2$. Moreover, it is called $\textit{positive}$ if $\text{tr}A>0$ and $\textit{negative}$ if $\text{tr}A<0$. Throughout, we will express the monodromy in terms of the generators $T=\begin{bmatrix} 1&1\\0&1\end{bmatrix}$ and $S=\begin{bmatrix} 0&1\\-1&0\end{bmatrix}$. 

\begin{figure}
	\centering
	\begin{subfigure}{.45\textwidth}
		\centering
		\includegraphics[scale=.55]{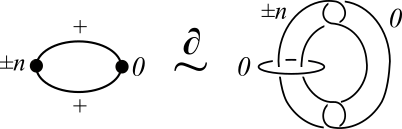}
		\caption{Cyclic plumbing with boundary $\textbf{T}_{\pm n}$}\label{par0}
	\end{subfigure}
	\begin{subfigure}{.5\textwidth}
		\centering
		\includegraphics[scale=.55]{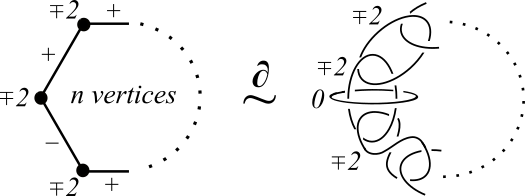}
		\caption{Another cyclic plumbing with boundary $\textbf{T}_{\pm n}$}\label{par1}
		\vspace{.2cm}
	\end{subfigure}

	\begin{subfigure}{\textwidth}
		\centering
		\includegraphics[scale=.55]{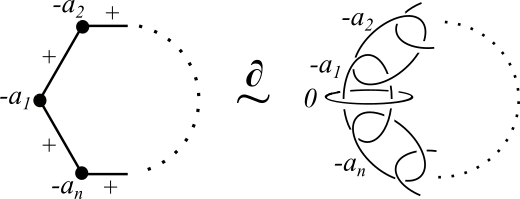}
		\caption{Cyclic plumbing with boundary $\textbf{T}_{A(\textbf{a})}$, where $\textbf{a}=(a_1,\ldots,a_n)$}\label{hypsurgery}
	\end{subfigure}
	\caption{The boundaries of cyclic plumbings are $T^2$-bundles over $S^1$}\label{cyclic}
\end{figure}

Up to conjugation, negative parabolic torus bundles have monodromies of the form $-T^{\pm n}$, where $n\ge0$ is an integer; we denote them by $\textbf{T}_{\pm n}$. $\textbf{T}_{\pm n}$ is the boundary of the cyclic plumbing shown in the left of Figure \ref{par0}. This plumbing diagram gives rise to the obvious surgery diagram of $\textbf{T}_{\pm n}$ shown in the right of Figure \ref{par0}. If $n\ge 2$, then $\textbf{T}_{\pm n}$ also bounds the negative/positive definite cyclic plumbing shown in the left of Figure \ref{par1}. This can be seen by performing blowups and blowdowns to the surgery diagram in Figure \ref{par0} to obtain the the surgery diagram in the right of Figure \ref{par1}, which is the boundary of the negative cyclic plumbing in the left of Figure \ref{par1}. Up to conjugation, positive hyperbolic torus bundles have monodromies of the form $ T^{-a_1}S\cdots T^{-a_n}S$, where $a_i\ge 2$ for all $i$ and $a_j\ge 3$ for some $j$. These are the boundaries of the positive cyclic plumbings shown in the left of Figure \ref{hypsurgery}. To simplify notation, we will use $\textbf{T}_{A(\textbf{a})}$ to denote the hyperbolic torus bundle with monodromy $A(\textbf{a})=T^{-a_1}S\cdots T^{-a_n}S$, where $\textbf{a}=(a_1,\ldots,a_n)$. For details, see \cite{neumann}.

The next two results provide two infinite families of torus bundles over $S^1$ that bound $\QQ S^1\times B^3s$. The first follows from a rather simple observation; we label it as a lemma for easy reference.

\begin{lem} All negative parabolic torus bundles bound $\QQ S^1\times B^3s$. \label{parlem}\end{lem}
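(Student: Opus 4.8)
The plan is to recognize each negative parabolic bundle $\mathbf{T}_{\pm n}$ as the boundary of a disk bundle over the Klein bottle and then to exploit the fact that the Klein bottle is itself a rational homology circle. Concretely, I would exhibit $\mathbf{T}_{\pm n}$ as the total space of an oriented $S^1$-bundle $E\to K$ over the Klein bottle $K$, and then take $W$ to be the associated $D^2$-bundle, so that $\partial W=E=\mathbf{T}_{\pm n}$. Since $W$ deformation retracts onto its zero section $K$, we have $H_*(W;\QQ)\cong H_*(K;\QQ)$, and because $H_0(K;\QQ)\cong H_1(K;\QQ)\cong\QQ$ with $H_i(K;\QQ)=0$ for $i\ge 2$ (the Klein bottle is a nonorientable surface, so rationally it agrees with $S^1$), $W$ has the rational homology of $S^1\times B^3$. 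This makes $W$ a $\QQ S^1\times B^3$, which is the desired conclusion.

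The substance of the argument is the identification of $\mathbf{T}_{\pm n}$ with a circle bundle over $K$, which I would carry out at the level of monodromies. The Klein bottle fibers over $S^1$ with fiber a circle and monodromy $-1$; restricting an $S^1$-bundle $E\to K$ over the fiber circle of this fibration presents $E$ as a $T^2$-bundle over $S^1$, whose fiber torus is spanned by the class $t$ of the Klein bottle's fiber circle and the class $z$ of the bundle fiber. Going once around the base, $t\mapsto -t$ (the Klein bottle monodromy), while $z\mapsto -z$ up to a shift by $\mp n\,t$ recording the twisting of the $S^1$-bundle, so that the monodromy on $H_1(T^2)$ is $\begin{bmatrix} -1 & 0 \\ \mp n & -1\end{bmatrix}$. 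Conjugating by the coordinate swap $\begin{bmatrix} 0 & 1 \\ 1 & 0\end{bmatrix}\in GL(2,\ZZ)$ turns this into $\begin{bmatrix} -1 & \mp n \\ 0 & -1\end{bmatrix}=-T^{\pm n}$, which is exactly the monodromy of $\mathbf{T}_{\pm n}$. As $n$ ranges over the nonnegative integers and the sign varies, these circle bundles realize every negative parabolic bundle.

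I expect the only real care to be needed in the orientation bookkeeping. The total space $E$, and hence $W$, must be an oriented $4$-manifold bounding $\mathbf{T}_{\pm n}$ with its given orientation, which forces me to choose the $S^1$-bundle whose fiber orientation is reversed precisely over the orientation-reversing loop of $K$ (reflected in the entry $z\mapsto -z$ above), so that $w_1$ of the $D^2$-fiber bundle cancels $w_1(TK)$ and $W$ is orientable; I would verify this cancellation directly on the two generators of $\pi_1(K)$. It is worth noting as a sanity check that this is exactly where the \emph{negative} hypothesis enters: the positive parabolic bundles with monodromy $+T^n$ are instead circle bundles over the torus $T^2$, whose disk bundles satisfy $H_2(\,\cdot\,;\QQ)\cong H_2(T^2;\QQ)\neq 0$ and so are \emph{not} rational homology circles. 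Finally, I would cross-check the identification against the explicit cyclic plumbing and surgery diagram of Figure \ref{par0}, and note that since the lemma asserts only the existence of \emph{some} $\QQ S^1\times B^3$, the precise value of the twisting $\mp n$ never needs to be pinned down beyond guaranteeing that the base is the Klein bottle.
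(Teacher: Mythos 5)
Your proof is correct, but it takes a genuinely different route from the paper. The paper argues diagrammatically: starting from the obvious handlebody diagram of the cyclic plumbing bounded by $\mathbf{T}_{\pm n}$ (Figure \ref{par0}), it surgers the $0$-framed $2$-handle into a $1$-handle (trading the $0$-framed unknot for a dotted circle), which does not change the boundary, and then a short homology computation shows the resulting $4$-manifold is a $\QQ S^1\times B^3$. You instead identify $\mathbf{T}_{\pm n}$ intrinsically as the orientable circle bundles over the Klein bottle $K$ and fill with the associated twisted $D^2$-bundle; since that bundle deformation retracts to $K$, whose rational homology is that of $S^1$, the filling is a $\QQ S^1\times B^3$. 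Your monodromy computation is the right way to make the identification precise, and your orientation bookkeeping (the fiber orientation must reverse exactly over the orientation-reversing loop of $K$, so that $w_1$ of the disk bundle cancels $w_1(TK)$) is exactly the needed condition --- though for internal consistency you should not call $E\to K$ an \emph{oriented} $S^1$-bundle in your opening paragraph, since it is precisely the non-fiber-orientable one, as your own matrix (with the $z\mapsto -z$ entry) records. As for what each approach buys: yours is more conceptual, since it explains \emph{why} the negative parabolic bundles are the ones that work (positive parabolic bundles are instead circle bundles over $T^2$, whose disk bundles have $H_2\neq 0$ rationally), and it exhibits the filling as a compact aspherical $4$-manifold. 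The paper's Kirby-calculus construction stays within the surgery-diagram framework used throughout --- for instance, the knot $K$ of Figure \ref{attachtoball} used in the proof of Theorem \ref{thmcombined} sits inside exactly that surgered diagram --- and it makes visible that the filling admits a handle decomposition with no $3$-handles, the property contrasted with Theorem \ref{hypthm}. Your filling shares this property (a disk bundle over a surface needs only handles of index at most $2$), so nothing downstream in the paper would be lost by substituting your argument.
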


The second infinite family consists of positive hyperbolic torus bundles that bound $\QQ S^1\times B^3s$. These $\QQ S^1\times B^3s$ are more difficult to construct. In particular, we will show that these $\QQ S^1\times B^3s$ never admit handlebody decompositions without 3-handles, unlike the $\QQ S^1\times B^3s$ that will be constructed in the proof of Lemma \ref{parlem}. For ease of notation, we use $-(a_1,\ldots,a_n)$ to denote the string $(-a_1,\ldots,-a_n)$, and a string of the form $(\ldots, a^{[x]},\ldots)$ denotes the string $(\ldots,\underbrace{a,\ldots,a}_{x},\ldots)$.

\begin{thm} Let $\textbf{a}=(3+x_1,2^{[x_2]},\ldots,3+x_{2k+1},2^{[x_1]}, 3+x_2,2^{[x_3]},\ldots,3+x_{2k},2^{[x_{2k+1}]})$, where $k\ge0$ and $x_i\ge0$ for all $i$. Then $\textbf{T}_{A(\textbf{a})}$ bounds a rational homology circle $W$ with $H^3(W)=\ZZ_2$. Moreover, any handlebody decomposition of any $\QQ S^1\times B^3$ bounded by $\textbf{T}_{A(\textbf{a})}$ necessarily contains 3-handles.\label{hypthm}\end{thm}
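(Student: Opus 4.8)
The plan is to prove Theorem \ref{hypthm} in two parts. The first part is to explicitly construct a rational homology circle $W$ bounded by $\textbf{T}_{A(\textbf{a})}$ for the specified family of strings $\textbf{a}$; the second, harder part is to show that $H^3(W) = \ZZ_2$ is forced, which in turn obstructs any handlebody decomposition without 3-handles.

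For the construction, I would begin from the cyclic plumbing presentation of $\textbf{T}_{A(\textbf{a})}$ given in Figure \ref{hypsurgery}, whose monodromy $T^{-a_1}S\cdots T^{-a_n}S$ is read off from the weights $a_i$. The combinatorial structure of the string $\textbf{a}$ is designed to exhibit a symmetry: reading the entries, the interleaving pattern of the $3+x_i$ terms with blocks $2^{[x_j]}$ of $2$'s should arrange the cyclic plumbing into a form that admits a natural involution or a self-matching of the base spheres. I expect that this symmetry allows one to build $W$ by a quotient/doubling construction or, more concretely, by attaching a single 1-handle and then a sequence of 2-handles (plus at least one 3-handle) so that the resulting 4-manifold has $H_1(W;\QQ)=\QQ$ and $H_2(W;\QQ)=0$. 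To verify that $W$ is a $\QQ S^1\times B^3$, I would compute $H_*(W;\QQ)$ from the handle chain complex and check that the intersection form on the 2-handles is, after tensoring with $\QQ$, degenerate in exactly the right way (the linking matrix associated to $\textbf{a}$ should have a one-dimensional rational kernel and otherwise be unimodular over $\QQ$). The torsion computation, giving $H_3(W;\ZZ) = H^3(W) = \ZZ_2$ by universal coefficients and the relevant duality, reduces to identifying a $2$-torsion summand in the cokernel of the boundary map, which the specific arithmetic of the $a_i$ should produce.

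The main obstacle is the second assertion: that \emph{every} handlebody decomposition of \emph{any} $\QQ S^1\times B^3$ bounded by $\textbf{T}_{A(\textbf{a})}$ must contain 3-handles. This is a statement about all such $W'$, not just the constructed one, so it cannot be read off from a single handle picture. The natural route is to argue that $H^3(W';\ZZ)$ is a diffeomorphism invariant of the pair $(W',\partial W')$ constrained by the boundary: if $W'$ had a handlebody decomposition with only $0$-, $1$-, and $2$-handles, then $H_3(W';\ZZ)=0$, so $H^3(W';\ZZ)\cong H_1(W',\partial W';\ZZ)$ would be forced to vanish or be free by Poincar\'e--Lefschetz duality and universal coefficients. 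I would therefore show that the $\ZZ_2$ appearing in $H^3$ is dictated by the boundary $\textbf{T}_{A(\textbf{a})}$ together with the requirement that $W'$ be a rational homology $S^1\times B^3$, using the long exact sequence of $(W',\partial W')$ to pin down the torsion in $H_3(W';\ZZ)$ independently of the handle structure.

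Concretely, I would use Poincar\'e--Lefschetz duality $H_3(W';\ZZ)\cong H^1(W',\partial W';\ZZ)$ and the long exact sequence of the pair, combined with the rational homology constraints $H_1(W';\QQ)=\QQ$, $H_2(W';\QQ)=0$, and the known integral homology of the torus bundle $\textbf{T}_{A(\textbf{a})}$, to show that $H_3(W';\ZZ)$ must contain a nontrivial $2$-torsion subgroup regardless of the chosen decomposition. The key computation is to determine $H_1(\textbf{T}_{A(\textbf{a})};\ZZ)$ from the monodromy $A(\textbf{a})$ --- it is the cokernel of $A(\textbf{a}) - I$ acting on $\ZZ^2$ (plus a $\ZZ$ summand from the $S^1$ direction) --- and to verify that the image of $H_1(\partial W')\to H_1(W')$ and the associated torsion linking pairing force a $\ZZ_2$ in $H_3(W')$. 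Since a handlebody built from only $0$-, $1$-, and $2$-handles has $H_3=0$, the existence of this nonzero $H_3(W';\ZZ)=\ZZ_2$ immediately yields a contradiction, completing the proof. I expect the torus-bundle homology computation and the duality bookkeeping to be the delicate points, but both are self-contained linear-algebra-over-$\ZZ$ calculations once the monodromy of $\textbf{a}$ is made explicit.
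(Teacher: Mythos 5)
Your strategy for the second half of the theorem has a fatal flaw. You propose to derive the contradiction by showing that $H_3(W';\ZZ)$ of an arbitrary $\QQ S^1\times B^3$ filling $W'$ must contain a nontrivial $2$-torsion subgroup, forced by duality and the long exact sequence of the pair. This can never work: for a compact connected oriented $4$-manifold with nonempty boundary, Poincar\'e--Lefschetz duality and universal coefficients give $H_3(W';\ZZ)\cong H^1(W',\partial W';\ZZ)\cong \operatorname{Hom}(H_1(W',\partial W';\ZZ),\ZZ)$, which is torsion-free; the $\ZZ_2$ in the statement of the theorem lives in $H^3(W)\cong \operatorname{Ext}(H_2(W;\ZZ),\ZZ)$, i.e.\ it is $2$-torsion in $H_2$, not in $H_3$ (your identification ``$H_3(W;\ZZ)=H^3(W)$'' is exactly the error). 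More seriously, even after this correction the plan of pinning down the torsion ``independently of the handle structure'' from the boundary's homology alone cannot succeed. Take $\textbf{a}=(3)$: by Lemma \ref{order} the torsion of $H_1(\textbf{T}_{A(3)};\ZZ)$ has order $\operatorname{tr}(A(3))-2=1$, so $\textbf{T}_{A(3)}$ (which is $0$-surgery on the figure-eight knot) is an \emph{integral} homology $S^1\times S^2$, and the integral homology of an honest $S^1\times B^3$ --- which has no torsion in any degree --- satisfies every constraint coming from the long exact sequence of the pair, duality, universal coefficients, and the (here trivial) torsion linking pairing. So no amount of homological bookkeeping with the pair $(W',\partial W')$ can force $H^3(W')\neq 0$ or rule out a $3$-handle-free decomposition; the obstruction simply is not visible at that level.

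What the paper actually does (Proposition \ref{3handlesprop}) is a covering space argument, which is the missing idea. If $W$ had a handle decomposition without $3$-handles, then $H_1(\partial W;\ZZ)\to H_1(W;\ZZ)$ is surjective, so the $\ZZ_2$-quotient map $\textbf{T}_{A(\textbf{a})^2}\to \textbf{T}_{A(\textbf{a})}$ extends to a double cover $\widetilde{W}\to W$ with $\partial\widetilde{W}=\textbf{T}_{A(\textbf{a})^2}$; absence of $3$-handles is used again (Lemma \ref{coverlem}) to show $\widetilde{W}$ is itself a $\QQ S^1\times B^3$. But by Sakuma's trace formula, $|\operatorname{Tor}(H_1(\textbf{T}_{A(\textbf{a})^2};\ZZ))|=(\operatorname{tr}A(\textbf{a}))^2-4$, which is never a perfect square (Corollary \ref{squarenotsquare}), contradicting the fact that the torsion of $H_1$ of the boundary of any $\QQ S^1\times B^3$ has square order (Lemma \ref{squarelem}). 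Note also that your sketch of the first half omits the paper's key construction: after blowups and blowdowns, $\textbf{T}_{A(\textbf{a})}$ is exhibited as $0$-surgery along a strongly negative amphicheiral knot in a surgered manifold, and Kawauchi's Lemma 2.3 then produces $W$ as the mapping cylinder of the quotient by a free orientation-reversing involution, with $H^3(W)=\ZZ_2$. Your proposed alternative of building $W$ by handle attachments and checking the intersection form is not carried out, and --- by the very second half of the theorem --- any such $W$ must contain $3$-handles, so the rational homology cannot be certified by the $2$-handle linking matrix alone.
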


As an application of Lemma \ref{rationalballobstruction}, Lemma \ref{parlem}, and Theorem \ref{hypthm}, we can construct more infinite families of $\QQ S^3s$ bounding $\QQ B^4s$ (most of which are not $\QQ S^3s$). The following corollary highlights some such families. Many more examples can be constructed similarly.

\begin{figure}[t]
	\centering
	\begin{subfigure}{.2\textwidth}
		\centering
		\hspace{-.5cm}
		\includegraphics[scale=.65]{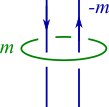}
		\caption{}\label{bounds2}
	\end{subfigure}
	\begin{subfigure}{.25\textwidth}
		\centering
		\hspace{-.5cm}
		\includegraphics[scale=.55]{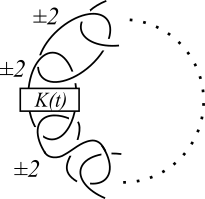}
		\caption{}\label{bounds4}
	\end{subfigure}
	\begin{subfigure}{.25\textwidth}
		\centering
		\hspace{-.5cm}
		\includegraphics[scale=.55]{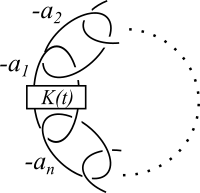}
		\caption{}\label{bounds3}
	\end{subfigure}
	\caption{$\QQ S^3s$ that bound $\QQ B^4s$. The boxes labeled $K(t)$ in (A), (C), and (D) indicate tying the strands passing through the box into a knot $K$ and adding $t$ full-twists. The link in (B) is a two-component link consisting of unknots with linking number 0; the blue vertical strands form a portion of one of the link components. The surgery coefficients in (C) are of the form\\ $-(3+x_1,2^{[x_2]},3+x_3,2^{[x_4]},\ldots,3+x_{2k+1},2^{[x_1]},  3+x_2,2^{[x_3]},\ldots,3+x_{2k},2^{[x_{2k+1}]})$. }\label{bounds}
\end{figure}

\begin{cor} Let $n,m,k,x_1,\ldots,x_{2k+1}\in\ZZ$ such that $n\ge2$, $m\neq0$, and either $k\ge 1$ or $x_1\ge 1$. Then the following $\QQ S^3s$ bound $\QQ B^4s$.
	\begin{enumerate}[(a)]
		\item $(m,-m)$-surgery along any two-component link with unknot components that have linking number 0 as in Figure \ref{bounds2} bounds a $\QQ B^4$.
		\item $\pm(2^{[n]})$-surgery along any link of the form shown in Figure \ref{bounds4} bounds a $\QQ B^4$.
		\item $-(3+x_1,2^{[x_2]},3+x_3,2^{[x_4]},\ldots,3+x_{2k+1},2^{[x_1]},  3+x_2,2^{[x_3]},\ldots,3+x_{2k},2^{[x_{2k+1}]})$-surgery along any link of the form shown in Figure \ref{bounds3} bounds a $\QQ B^4$.
	\end{enumerate}
	\label{rationalballcor}
\end{cor}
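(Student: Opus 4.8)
The plan is to derive all three families as applications of Lemma \ref{rationalballobstruction}, feeding in the rational homology circles produced by Lemma \ref{parlem} and Theorem \ref{hypthm}. In each case the scheme is identical. I would start with the torus bundle $Y$ bounding a rational homology circle $W$, draw the surgery diagram of $Y$ coming from its cyclic plumbing (Figures \ref{par0} and \ref{par1} in the parabolic cases, Figure \ref{hypsurgery} in the hyperbolic case), and then locate inside $Y$ a knot $K$ representing a generator of the free part of $H_1(Y;\ZZ)$. Since each such $Y$ is a $\QQ S^1\times S^2$, the Wang sequence of the bundle gives a split short exact sequence $0\to\operatorname{coker}(A-I)\to H_1(Y;\ZZ)\to\ZZ\to0$ with $\operatorname{coker}(A-I)$ finite (as $\det(A-I)\neq0$ for these monodromies). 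Hence the free summand is generated by a section, and a curve traversing the necklace of the cyclic plumbing exactly once represents this class and so has infinite order. Attaching a $2$-handle to $W$ along such a $K$ then yields a $\QQ B^4$ by Lemma \ref{rationalballobstruction}, whose boundary is the corresponding integer surgery.

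The knot $K$ is precisely what the box labeled $K(t)$ records: the strands passing through the box form an arc of the section, and tying them into an arbitrary knot $K$ and inserting $t$ full twists produces an entire family of knots in $Y$, all still traversing the base once and hence all of infinite order in $H_1(Y;\ZZ)$. Thus Lemma \ref{rationalballobstruction} applies uniformly to every member of each family. For parts (a) and (b) I would use the negative parabolic bundles of Lemma \ref{parlem}: starting from the surgery descriptions in Figures \ref{par0} and \ref{par1}, a short sequence of blow-ups, blow-downs, and handle slides turns the union of the plumbing link with the $2$-handle along $K$ into, respectively, the two-component linking-number-zero link of Figure \ref{bounds2} with framings $(m,-m)$ and the $\pm2$-framed link of Figure \ref{bounds4}. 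The hypotheses $m\neq0$ and $n\geq2$ are exactly what is needed for the resulting linking matrix to be nondegenerate, i.e.\ for the surgery to yield a genuine $\QQ S^3$ (and, for $n\geq2$, for the definite cyclic plumbing of Figure \ref{par1} to be available).

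For part (c) the input is Theorem \ref{hypthm}: the positive hyperbolic bundle $\textbf{T}_{A(\textbf{a})}$ with the prescribed string $\textbf{a}=(3+x_1,2^{[x_2]},\ldots,3+x_{2k},2^{[x_{2k+1}]})$ bounds a rational homology circle, and its surgery diagram is the positive cyclic plumbing of Figure \ref{hypsurgery} with framings $-a_i$. These framings match the string in the statement verbatim, so after adding the $2$-handle along the infinite-order section $K$ through the $K(t)$ box, the diagram is already in the form of Figure \ref{bounds3}; the condition $k\geq1$ or $x_1\geq1$ places us in the hyperbolic regime of Theorem \ref{hypthm} and ensures the link of Figure \ref{bounds3} has the stated form. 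Applying Lemma \ref{rationalballobstruction} finishes this case.

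The main obstacle I expect is bookkeeping rather than conceptual difficulty. The two delicate points are: (i) verifying that the cycle-traversing curve, after being knotted and twisted by the $K(t)$ box, continues to represent the infinite-order generator of $H_1(Y;\ZZ)$, so that the hypothesis of Lemma \ref{rationalballobstruction} is genuinely met for the entire family; and (ii) carrying out the Kirby calculus that converts each plumbing-plus-$2$-handle diagram into the clean diagrams of Figure \ref{bounds}, keeping careful track of how the framings, the linking numbers, and the inserted twists transform under blow-ups and handle slides. Once these are in place, all three parts follow immediately from the three quoted results.
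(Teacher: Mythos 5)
Your high-level plan---pairing Lemma \ref{rationalballobstruction} with the rational homology circles of Lemma \ref{parlem} and Theorem \ref{hypthm}---is the paper's, and for parts (b) and (c) your argument is essentially the one given there: the paper attaches a $2$-handle along a knotted, twisted meridian of the $0$-framed surgery curve (an infinite-order class) and cancels it against the $0$-framed unknot, which is exactly how the $K(t)$ box of Figures \ref{bounds4} and \ref{bounds3} arises. The genuine gap is part (a), which you propose to obtain from the negative parabolic bundles $\textbf{T}_{\pm n}$. That choice of input cannot work, for a homological reason. For any knot $K$ of infinite order in $\textbf{T}_{\pm n}$ and any integer surgery $Y'$ along $K$, quotienting $H_1(Y';\ZZ)$ further by the meridian of $K$ gives a surjection $H_1(Y';\ZZ)\to H_1(\textbf{T}_{\pm n};\ZZ)/\langle [K]\rangle$. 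Writing $H_1(\textbf{T}_{\pm n};\ZZ)\cong\ZZ\oplus G$ with $|G|=4$ (this is the Wang-sequence computation you cite: $|\det(-T^{\pm n}-I)|=4$) and $[K]=(c,\tau)$ with $c\neq 0$, that quotient has order $4|c|$, so $|H_1(Y';\ZZ)|$ is divisible by $4$. But $(m,-m)$-surgery on a two-component link of unknots with linking number $0$ has $H_1\cong\ZZ_m\oplus\ZZ_m$, of odd order when $m$ is odd. So for odd $m$---in particular $m=\pm 1$, precisely the case needed to deduce Theorem \ref{thm1} by blowing up an unknotting crossing---no surgery on any infinite-order knot in any parabolic bundle yields these manifolds.

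The paper avoids this by proving part (a) with the hyperbolic bundles $\textbf{T}_{A(\textbf{a})}$, $\textbf{a}=(m+2,2^{[m-1]})$, whose torsion has order $\text{tr}(A(\textbf{a}))-2=m^2$ (Lemma \ref{order}), and---this is the second thing your proposal misses---it argues in the reverse direction. Part (a) quantifies over \emph{all} two-component links of the stated shape; Figure \ref{bounds2} has no $K(t)$ box (unlike Figures \ref{bounds4} and \ref{bounds3}), so this family is not parametrized by a choice of attaching knot, and a forward construction (attach a handle, then simplify by Kirby moves) cannot cover it. Instead the paper starts from the arbitrary given diagram, introduces a canceling $0$-framed pair, and performs a doubling trick with handle slides (Figure \ref{2comps2}) to exhibit $Y$ itself as $0$-surgery along an infinite-order knot inside $\textbf{T}_{A(\textbf{a})}$, at which point Lemma \ref{rationalballobstruction} applies. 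The hypotheses that both components are unknots and that the linking number is $0$ are used essentially in that identification---they guarantee that the two arcs in the knotted region pair up correctly and that the doubled arcs untangle---so they do far more than make the linking matrix nondegenerate, which is the only role you assign them.
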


A natural question is whether there are any other torus bundles that bound $\QQ S^1\times B^3s$. This question is explored by the author in \cite{simone4}, where it is shown that there are no other torus bundles that bound $\QQ S^1\times B^3s$. Thus the only torus bundles that bound $\QQ S^1\times B^3s$ are those listed in Lemma \ref{parlem} and Theorem \ref{hypthm}. If we expand our view, however, and consider more general plumbed 3-manifolds with a single cycle, we can construct a large class of plumbed 3-manifolds that bound $\QQ S^1\times B^3s$ by using the $\textit{self-join operation}$.

\begin{definition}[c.f. Aceto \cite{aceto}] Let $X$ be a plumbing whose associated graph is a tree and let $v_1$ and $v_2$ be distinguished vertices.  Let $X_{v_1=\pm v_2}$ be the positive/negative plumbing obtained by identifying $v_1$ and $v_2$ and taking the sum of the corresponding weights to be the weight of the new vertex. We say that $X_{v_1=\pm v_2}$ is obtained from $X$ by \textit{self-joining} $X$ along $v_1$ and $v_2$.\label{definition}\end{definition}

\begin{prop} Let $X$ be a plumbing tree such that $Y=\partial X$ bounds a $\mathbb{Q}S^1\times B^3$. Let $v_1$ and $v_2$ be distinct vertices of $X$ and let $Q_{\pm}$ denote the intersection form of the plumbing $X_{v_1=\pm v_2}$. If $\det Q_{\pm}\neq0$, then $\partial (X_{v_1=\pm v_2})$ bounds a $\mathbb{Q}S^1\times B^3$.\label{constructionprop}\end{prop}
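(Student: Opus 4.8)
The plan is to first pin down the homology of $M:=\partial(X_{v_1=\pm v_2})$, and then to build an explicit filling $W'$ of $M$ out of $W$ and verify $H_*(W';\QQ)\cong H_*(S^1;\QQ)$ by a comparison argument. Write $n$ for the number of vertices of $X$. Since the graph of $X_{v_1=\pm v_2}$ is the tree of $X$ with $v_1,v_2$ identified, it has exactly one cycle, so $X_{v_1=\pm v_2}$ deformation retracts onto a wedge of $2$-spheres and a single circle; thus $H_1(X_{v_1=\pm v_2})\cong\ZZ$, $H_2(X_{v_1=\pm v_2})\cong\ZZ^{\,n-1}$ with intersection form $Q_\pm$, and $H_{\ge 3}=0$. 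Feeding this into the long exact sequence of $(X_{v_1=\pm v_2},M)$ together with Lefschetz duality yields a short exact sequence $0\to\operatorname{coker}(Q_\pm)\to H_1(M)\to\ZZ\to 0$. Hence $b_1(M)=1$ precisely when $\det Q_\pm\neq 0$; this is the role of the hypothesis, and it is exactly the condition needed for $M$ to be a $\QQ S^1\times S^2$, i.e. for $M$ to have any chance of bounding a $\QQ S^1\times B^3$.

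For the construction I would realize the self-join as a modification supported near the boundary. The two distinguished plumbing spheres $S_{v_1},S_{v_2}\subset X$ have belt circles $\gamma_1,\gamma_2$ lying in $Y=\partial X=\partial W$, and the passage from $X$ to $X_{v_1=\pm v_2}$ (tubing the two spheres into one and creating the new one-cycle) can be encoded as a cobordism $Z$ from $Y$ to $M$ built by attaching handles along $\gamma_1,\gamma_2$ — one handle to produce the new $b_1$ and a complementary handle effecting the merge — with $\chi(Z)=-2$. The key point is that $Z$ is described entirely in terms of $Y$ and the curves $\gamma_i\subset Y$, hence is independent of which side fills $Y$; I may therefore set $W':=W\cup_Y Z$ and simultaneously write $X_{v_1=\pm v_2}=X\cup_Y Z$, with $\partial W'=M$.

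I would then compute $H_*(W';\QQ)$ by running the long exact sequences of the pairs $(X_{v_1=\pm v_2},Z)$ and $(W',Z)$ in parallel. Excision identifies $H_*(X_{v_1=\pm v_2},Z)\cong H_*(X,Y)$ and $H_*(W',Z)\cong H_*(W,Y)$, so the two sequences share the terms $H_*(Z)$ and differ only through the fillings: $H_*(X;\QQ)$ (free of rank $n$ in degree $2$, trivial in degree $1$) versus $H_*(W;\QQ)$ (trivial in degree $2$, rank $1$ in degree $1$). Using the already-computed homology of $X_{v_1=\pm v_2}$ to constrain $H_*(Z)$ and the maps $H_*(Y)\to H_*(Z)$, together with the maps $H_*(Y)\to H_*(W)$ — which are an isomorphism on $H_1$ and zero on $H_2,H_3$ since $W$ is a $\QQ S^1\times B^3$ — I expect the bookkeeping to collapse to $H_*(W';\QQ)\cong H_*(S^1;\QQ)$.

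The main obstacle will be the degree-$2$ accounting: showing simultaneously that $b_2(W')=0$ and that no spurious $H_3$ appears, i.e. that the class of the essential $S^2\subset Y$ survives in $H_2(Z;\QQ)$ and is not killed on passing to $W'$. This is exactly the step that forces the hypothesis $\det Q_\pm\neq 0$ back into the argument — equivalently, it is the incarnation inside $Z$ of the identity $b_1(M)=1$ established in the first paragraph. A secondary technical point is to verify rigorously that the cobordism $Z$ realizing the self-join is genuinely filling-independent and matches the abstract vertex-identification of Definition \ref{definition}, since the spheres $S_{v_1},S_{v_2}$ themselves lie in the interior of $X$ and only their belt circles are visible from the $W$ side.
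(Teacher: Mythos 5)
Your first paragraph is correct, and it is exactly how the hypothesis enters the paper's argument: $\det Q_\pm\neq 0$ is equivalent to $b_1(M)=1$, where $M=\partial(X_{v_1=\pm v_2})$. The genuine gap is in your second paragraph, and it propagates into the third. The cobordism that actually realizes the self-join from the boundary side --- and the one the paper uses --- is $Z=(Y\times I)\cup(\text{one 1-handle})\cup(\text{one 0-framed 2-handle})$, where the 2-handle is attached along a curve $U_\mp$ that runs once over the 1-handle and links the two belt circles $\gamma_1,\gamma_2$ once each; it is indeed determined by data in $Y$ alone, but it has $\chi(Z)=0$, not $-2$ (and your own description, two handles attached along curves, would give two 2-handles and hence $\chi(Z)=+2$). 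For this $Z$ the identities $W'=W\cup_Y Z$ and $\partial W'=M$ hold, but the second identity you require, $X_{v_1=\pm v_2}=X\cup_Y Z$, is false: writing $n$ for the number of vertices of $X$, one has $\chi(X\cup_Y Z)=\chi(X)=n+1$, while $\chi(X_{v_1=\pm v_2})=n-1$. The two fillings of $M$ agree only after surgery on an embedded 2-sphere --- this is precisely the Kirby move in the paper (trading the 0-framed 2-handle along $U_\mp$ for a dotted circle) that changes the 4-manifold while fixing its boundary. Since $X\cup_Y Z\not\cong X_{v_1=\pm v_2}$, the excision identification $H_*(X_{v_1=\pm v_2},Z)\cong H_*(X,Y)$ that drives your parallel long-exact-sequence bookkeeping has no basis, and you give no construction of a cobordism satisfying both of your requirements; reverse-engineering $\chi(Z)=-2$ from the Euler characteristics does not produce one.

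The repair is to notice that the identity you cannot have is also one you do not need. Keep only $W'=W\cup_Y Z$ and $\partial W'=M$, and argue directly on $W'$: $\chi(W')=\chi(W)=0$; $H_3(W';\QQ)=0$, since $H_3(W;\QQ)=0$ and attaching handles of index at most $2$ creates no relative homology in degree $3$; hence $H_1(W',M;\QQ)\cong H^3(W';\QQ)=0$ by Lefschetz duality, so $H_1(M;\QQ)\to H_1(W';\QQ)$ is onto and $b_1(W')\le b_1(M)=1$ by your first paragraph. Since $b_0(W')=1$ and $b_3(W')=b_4(W')=0$, the Euler characteristic gives $b_1(W')=1+b_2(W')$, forcing $b_1(W')=1$ and $b_2(W')=0$, i.e. $W'$ is a $\QQ S^1\times B^3$. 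This is in substance the paper's proof (the paper attaches the 1- and 2-handle to $W\natural(S^1\times B^3)$ and runs the same count), so your overall strategy is the right one; but the step you dismissed as a secondary technical point --- that $Z$ simultaneously matches the abstract self-join on the plumbing side --- is not merely unverified, it is false as stated, and the homology argument must be rerouted around it as above.
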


We will see that Proposition \ref{constructionprop} can be used to prove Lemma \ref{parlem}, but it cannot be used to prove Theorem \ref{hypthm}. Thus the plumbed 3-manifolds built using Proposition \ref{constructionprop} are not the only ones that bound $\QQ S^1\times B^3s$. Moreover, Lemma \ref{rationalballobstruction} and Proposition \ref{constructionprop} can be used to construct more infinite families of $\QQ S^3s$ bounding $\QQ B^4s$ (c.f. Corollary \ref{rationalballcor}).

\subsection{Symplectic Question} Let $\textbf{T}_{A(\textbf{a})}$ be a hyperbolic torus bundle listed in Theorem \ref{hypthm} and let $\xi$ be a tight contact structure on $\textbf{T}_{A(\textbf{a})}$. The question of whether $(\textbf{T}_{A(\textbf{a})},\xi)$ has a strong symplectic $\QQ S^1\times B^3$ filling is also of considerable interest. By a recent result of Christian \cite{christian}, there is no such filling when $\xi$ is virtually overtwisted. However, the question is more interesting when $\xi$ is universally tight.

Let $P_{\textbf{a}}$ be the negative-definite plumbing whose boundary is $\textbf{T}_{A(\textbf{a})}$ (Figure \ref{hypsurgery}). Then $P_{\textbf{a}}$ can be realized as the resolution of an isolated complex surface singularity and it thus admits a unique Milnor fillable contact structure $\xi_{can}$ (see, for example, \cite{neumann}), which is automatically universally tight by \cite{lekiliozbag}. 
%By Honda's classification of tight contact structures on torus bundles in \cite{hondatight2} and Corollary 3 in \cite{gayfillabletightstrs}, $\xi_{can}$ is the only universally tight contact structure, up to contactomorphism, that can possibly have a strong symplectic filling.
In \cite{gaystipsiczsympsurg}, it is shown that if $P_{\textbf{a}}$ is embedded in an ambient symplectic 4-manifold $(X,\omega)$ such that the base spheres are symplectic and intersect $\omega-$orthogonally, then $P_{\textbf{a}}$ admits a symplectic structure with strongly convex boundary and the induced contact structure on $\textbf{T}_{A(\textbf{a})}$ is $\xi_{can}$. Thus, if $(\textbf{T}_{A(\textbf{a})},\xi_{can})$ has a strong symplectic $\QQ S^1\times B^3$ filling $W$, then by \cite{etnyreconvexity} one can excise $P_{ \textbf{a}}$ from $X$ and glue $W$ in its place to obtain a potentially small exotic 4-manifold. 

\begin{question} Does $(\textbf{T}_{A(\textbf{a})},\xi_{can})$ the have a strong $\QQ S^1\times B^3s$ filling? \label{question} \end{question}

\textit{Symplectic cut-and-paste} operations, in which plumbing trees are symplectically replaced by smaller 4-manifolds, have been successfully used to construct small exotic 4-manifolds (e.g. the rational blowdown \cite{fintushelstern}, star surgery \cite{karakurtstarkston}, 2-replaceability \cite{simone1}, etc). In these constructions, the replacement manifolds admit Stein structures that induce the correct contact structures; thus the replacements naturally lend themselves to symplectic cut-and-paste. By Theorem \ref{hypthm}, however, every $\QQ S^1\times B^3$ bounded by $\textbf{T}_{A(\textbf{a})}$ necessarily contains 3-handles. Consequently, $\textbf{T}_{A(\textbf{a})}$ does not bound a Stein $\QQ S^1\times B^3$, making an affirmative answer to Question \ref{question} more interesting, albeit, more difficult to prove.

\subsection{Organization} In Section \ref{spheres}, we will prove Lemma \ref{rationalballobstruction} and use it to prove Theorem \ref{thm1} and Corollary \ref{rationalballcor}. In Section \ref{torusbundles}, we will prove Lemma \ref{parlem} and Theorem \ref{hypthm} by explicitly constructing $\QQ S^1\times B^3s$ bounded by torus bundles.  Finally, in Section \ref{qcircles}, we will prove Proposition \ref{constructionprop} and use it to construct some more examples of plumbed 3-manifolds that bound $\QQ S^1\times B^3s$.

\subsection{Acknowledgments} Thanks to Marco Golla for pointing out that part of Theorem \ref{thm1} follows from Corollary \ref{rationalballcor}(a), and to O\u{g}uz \c{S}avk for sharing important references regarding rationally slice knots and Brieskorn spheres bounding rational homology balls.

\section{Rational Homology Spheres bounding rational homology balls}\label{spheres}

By Theorem \ref{hypthm}, the hyperbolic torus bundle with monodromy $T^{-3}S$ bounds a rational homology circle. It will be shown in Proposition \ref{hypprop} that this hyperbolic torus bundle can be realized as $0$-surgery on the figure-eight knot, which is known to bound a rational homology circle. As mentioned in the introduction, Akbulut-Larson used this fact in \cite{akbulutlarson} to construct infinite families of Brieskorn spheres that bound $\QQ B^4s$. In a similar vein, we will use this torus bundle, along with the broader classes of torus bundles of Theorem \ref{hypthm} to prove Theorem \ref{thm1} and Corollary \ref{rationalballcor}. This construction relies Lemma \ref{rationalballobstruction}, which we now recall and prove.\\

\noindent\textbf{Lemma \ref{rationalballobstruction}.} \textit{Let $Y$ bound a $\mathbb{Q}S^1\times B^3$, $W$, and let $K$ be a knot in $Y$ such that $[K]$ has infinite order in $H_1(Y;\mathbb{Z})$. Then the 4-manifold obtained by attaching a 2-handle to $W$ along $K$ is a $\mathbb{Q}B^4$. Consequently, any integer surgery along $K$ yields a $\QQ S^3$ that bounds a $\QQ B^4$.}

\begin{proof} Let $B$ be the 4-manifold obtained by attaching a 2-handle $h$ to $W$ along $K$. Since $H_2(Y,W)$ is a torsion group, $[K]\in H_1(Y)$ must map to an infinite order element $m\in H_1(W)$ under the map induced by inclusion. $K$ also represents a generator of $H_1(W\cap h)=H_1(S^1\times D^2)=\ZZ$ that maps to $m\in H_1(W)$ under the map induced by inclusion ($W\cap h$ is the attaching region of $h$ in $Y$). Now by considering the Mayer-Vietoris sequence for $(B,W, h)$, it is easy to see that $B$ must have the rational homology of the 4-ball. 
\end{proof}

%By applying Lemma \ref{rationalballobstruction} to the hyperbolic torus bundles of Theorem \ref{hypthm}, we can construct the following infinite families of $\QQ S^3s$ that bound $\QQ B^4s$.\\

For simplicity, we combine the cases in Theorem \ref{thm1} and Corollary \ref{rationalballcor} in the following theorem.

\begin{thm} Let $n,m,k,x_i\in\ZZ$ for all $1\le i\le 2k+1$ such that $n\ge2$, $m\neq0$, and either $k\ge 1$ or $x_1\ge 1$. Then the following $\QQ S^3s$ bound $\QQ B^4s$.
	\begin{enumerate}[(a)]
		\item  If $K$ is an unknotting number one knot with a positive (resp. negative) crossing that can be switched to unknot $K$, then $S^3_{-1}(K)$ (resp. $S^3_1(K)$) bounds a $\QQ B^4$.
		\item $(m,-m)$-surgery along any two-component link with unknot components that have linking number 0 as in Figure \ref{bounds2} bounds a $\QQ B^4$. 
		\item $\pm(2^{[n]})$-surgery along any link of the form shown in Figure \ref{bounds4} bounds a $\QQ B^4$.
		\item $-(3+x_1,2^{[x_2]},3+x_3,2^{[x_4]},\ldots,3+x_{2k+1},2^{[x_1]},  3+x_2,2^{[x_3]},\ldots,3+x_{2k},2^{[x_{2k+1}]})$-surgery along any link of the form shown in Figure \ref{bounds3} bounds a $\QQ B^4$.
\end{enumerate}
\label{thmcombined}
\end{thm}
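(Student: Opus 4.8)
The plan is to derive every part of Theorem \ref{thmcombined} from the machinery already assembled in the excerpt: Lemma \ref{rationalballobstruction} (a knot of infinite order in a $\QQ S^1\times B^3$ boundary yields a $\QQ B^4$ after any integer 2-handle attachment), Lemma \ref{parlem} (negative parabolic torus bundles bound $\QQ S^1\times B^3s$), and Theorem \ref{hypthm} (the explicit hyperbolic family $\textbf{T}_{A(\textbf{a})}$ bounds a $\QQ S^1\times B^3$). The unifying strategy for parts (a)--(d) is always the same: recognize the given surgered manifold as the result of attaching a single integer-framed 2-handle to a torus bundle (or more general plumbed $\QQ S^1\times S^2$) along a curve $K$ that represents an infinite-order class in $H_1$, and then invoke Lemma \ref{rationalballobstruction}. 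So the real content is a sequence of Kirby-calculus identifications showing that each surgery diagram in Figure \ref{bounds} is obtained from one of the known $\QQ S^1\times B^3$-bounding 3-manifolds by such a handle attachment.

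I would treat part (a) first, since \c{S}avk and the acknowledgments suggest it follows from (b). Given an unknotting-number-one knot $K$ with a distinguished positive crossing whose switch unknots $K$, I would introduce a small meridional circle $c$ encircling the two strands at that crossing. Performing $\mp 1$-surgery on $c$ realizes the crossing change, so doing the crossing change turns $K$ into the unknot $U$; conversely, $K$ together with $c$ is isotopic to $U$ together with $c$, where now $c$ links the unknot. Thus the surgery description of $S^3_{-1}(K)$ can be slid to a surgery on a two-component unknotted link of linking number $0$, which is precisely the configuration of Figure \ref{bounds2} with framings $(m,-m)$ for the appropriate $m=\pm1$. The clasp-versus-crossing bookkeeping (positive crossing giving $-1$-surgery and $m>0$, negative giving $+1$-surgery) is where I expect the sign conventions to need the most care, but it is elementary Kirby calculus. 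The Whitehead-double remark then drops out because a twisted Whitehead double is exactly an unknotting-number-one knot whose clasp crossing unknots it.

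For parts (b), (c), and (d) the skeleton is: blow down or cancel the $(m,-m)$, $\pm(2^{[n]})$, or the long $-(3+x_1,2^{[x_2]},\dots)$ string of unknots so that what remains, before the final $K(t)$-tangle handle, is one of the torus bundles whose $\QQ S^1\times B^3$ filling is guaranteed. Concretely, in (c) the surgery coefficients are visibly the vector $\textbf{a}$ of Theorem \ref{hypthm}, so the cyclic portion of the link (ignoring the $K(t)$ box) is the torus bundle $\textbf{T}_{A(\textbf{a})}$, which bounds a $\QQ S^1\times B^3$; in (b) and the $n\ge2$ case of the $2$'s string, the relevant cyclic plumbing is a negative parabolic torus bundle $\textbf{T}_{\pm n}$ covered by Lemma \ref{parlem}. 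In each case the box $K(t)$ encodes attaching a single $2$-handle along a curve $K$ that runs once around the cycle, hence represents the infinite-order generator of $H_1$ of the torus bundle; Lemma \ref{rationalballobstruction} then finishes the argument, and the $t$ full twists only change the (integer) framing, which the lemma permits. I would verify in each diagram that $[K]$ genuinely has infinite order--equivalently that the curve is homologically essential in the $S^1$-direction of the bundle rather than a nullhomologous satellite--since this is the one hypothesis of Lemma \ref{rationalballobstruction} that could silently fail.

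The main obstacle, and the step deserving the most detailed treatment, is the Kirby-calculus normalization that brings each pictured surgery diagram into the standard torus-bundle form of Figure \ref{cyclic} while tracking the homology class of $K$. For (d) this means a multistage sequence of blowdowns transforming the string $-(3+x_1,2^{[x_2]},\dots,2^{[x_{2k+1}]})$ into the monodromy word $A(\textbf{a})=T^{-a_1}S\cdots T^{-a_n}S$; for (c) one must additionally confirm that the particular cyclic ordering of the coefficients matches the admissible $\textbf{a}$ of Theorem \ref{hypthm} (the interleaving of the $3+x_{\text{odd}}$ and $3+x_{\text{even}}$ blocks is exactly the pattern in the hypothesis, which is reassuring but must be checked). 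I would organize these computations as a short lemma identifying the ambient $3$-manifold of each figure, minus its $K(t)$ box, with a specific torus bundle, and then apply Lemma \ref{rationalballobstruction} uniformly; the case $k\ge1$ or $x_1\ge1$ is precisely the condition ensuring the relevant plumbing is genuinely hyperbolic (or parabolic with $n\ge2$) rather than degenerate, so I would flag that hypothesis as the nondegeneracy guarantee that the cycle carries an infinite-order first homology in which $K$ can sit.
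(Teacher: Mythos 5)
Your overall strategy is the paper's: every part is reduced to Lemma \ref{rationalballobstruction} applied to the $\QQ S^1\times B^3s$ supplied by Lemma \ref{parlem} and Theorem \ref{hypthm}. For parts (c) and (d) your argument is essentially the paper's proof read in the opposite direction: the paper starts from $\textbf{T}_{\pm n}$ (for (c)) or $\textbf{T}_{A(\textbf{a})}$ (for (d)), attaches an integer-framed 2-handle along a knot homologous to the meridian of the $0$-framed unknot in the standard diagram, and then cancels that $0$-framed unknot against the new handle; the knot type and framing of the attached handle become exactly the $K(t)$ box of Figures \ref{bounds4} and \ref{bounds3}. Your deduction of (a) from (b), by blowing up the distinguished crossing with a $\pm1$-framed circle to obtain surgery on a two-component unknotted link of linking number $0$, is also exactly the paper's.

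The genuine gap is part (b), on which your part (a) rests. You treat (b) as if it had the same structure as (c) and (d): blow down or cancel so that what remains, before a final ``$K(t)$-tangle handle,'' is a parabolic torus bundle. But the link of part (b) has no $K(t)$ box and no chain to peel off: $L_2$ is an \emph{arbitrary} unknot with $lk(L_1,L_2)=0$, and there is no visible torus-bundle complement --- if you try to view either component as the attached $2$-handle, surgery on the other component alone gives a lens space, which is a $\QQ S^3$ rather than a $\QQ S^1\times S^2$, so Lemma \ref{rationalballobstruction} cannot be applied directly. The missing idea, which is the bulk of the paper's proof (Figure \ref{2comps2}), is to \emph{add} a canceling pair of $0$-framed unknots, one of them running parallel to an arc of $L_2$ through the region containing all of its knotting; sliding $L_2$ twice over this curve doubles that arc and hence undoes the knotting (both hypotheses --- that $L_2$ is unknotted and that $lk(L_1,L_2)=0$ --- are needed to control how the arcs close up), and a final sequence of slides over the other new unknot absorbs the leftover writhe. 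The outcome exhibits $Y$ as $0$-surgery along an infinite-order knot in the \emph{hyperbolic} bundle $\textbf{T}_{A(\textbf{a})}$ with $\textbf{a}=(m+2,2^{[m-1]})$, so it is Theorem \ref{hypthm}, not Lemma \ref{parlem}, that must be invoked; your assertion that a negative parabolic bundle underlies (b) is unsupported, and no argument you sketch produces any torus bundle at all from the arbitrary link of Figure \ref{bounds2}. Without this step (b) is unproven, and with it (a) falls as well. A smaller inaccuracy: the hypothesis ``$k\ge1$ or $x_1\ge1$'' is not a nondegeneracy or hyperbolicity condition --- $\textbf{a}=(3)$ is already genuinely hyperbolic and covered by Theorem \ref{hypthm}.
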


\begin{proof} We start by proving part $(c)$. The proof of $(d)$ is identical. 
	Consider the negative parabolic torus bundle $\textbf{T}_{\pm n}$. By Lemma \ref{parlem},  $\textbf{T}_{\pm n}$ bounds a rational homology circle $W$. Let $K$ be a knot in $\textbf{T}_{\pm n}$ as depicted in the leftmost diagram of Figure \ref{attachtoball}. If we perform $n$-surgery along $K$ for any $n\in\ZZ$,  then by Lemma \ref{rationalballobstruction}, the resulting $\QQ S^3$, $Y$,  bounds a $\QQ B^4$. Starting with the surgery diagram for $Y$ shown on the left of Figure \ref{attachtoball}, slide the two strands of the $-1$-framed unknot passing through the 0-framed unknot over the knot $K$. Then cancel the 0-framed unknot and the knot $K$ to obtain the surgery diagram in the right of Figure \ref{attachtoball}.
	
	\begin{figure}
		\centering
		\includegraphics[scale=.6]{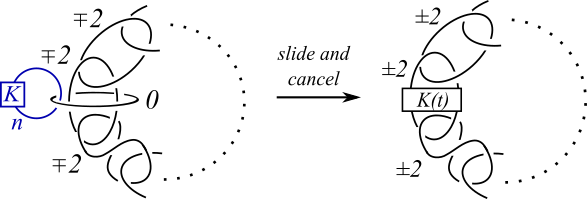}
		\caption{Performing $n$-surgery on $\textbf{T}_{\pm n}$ along the blue knot $K$ yields a $\QQ S^3$ that bounds a $\QQ B^4$.}
		\label{attachtoball}
	\end{figure}

	We now proceed to part (b). Let $\textbf{a}=(m+2,2^{[m-1]})$. If $m\ge 2$, then $\textbf{T}_{A(\textbf{a})}$ has the obvious surgery diagram shown in the left of Figure \ref{2comps}. After blowing up once, blowing down $m-1$ times, and isotoping, as in Figure \ref{2comps}, we obtain the surgery diagram on the right side of the figure. If $m=1$, then $\textbf{T}_{A(3)}$ has the surgery diagram shown in the left of Figure \ref{2compsa}. After blowing up and isotoping as in Figure \ref{2compsa}, we obtain the surgery on the right side of the figure. Thus the rightmost diagrams in Figures \ref{2comps} and \ref{2compsa} provide alternate surgery diagrams for $\textbf{T}_{A(\textbf{a})}$, where $\textbf{a}=(m+2,2^{[m-1]})$ and $m\ge 1$. 
	
	Let $Y$ be the $\QQ S^3$ obtained by $(m,-m)$-surgery along any two-component link with unknot components that have linking number 0 as in Figure \ref{bounds2}. Then the link can be isotoped so that $Y$ has the surgery diagram given in the top-left of Figure \ref{2comps2}, where the gray box contains the complexity of the second unknot (i.e. all of the crossings). Our goal is to show that $Y$ can be realized as $0$-surgery along a knot in $\textbf{T}_{A(\textbf{a})}$ that represents an infinite order element in $H_1(\textbf{T}_{A(\textbf{a})};\ZZ)$. In light of Theorem \ref{hypthm} and Lemma \ref{rationalballobstruction}, it will follow that $Y$ bounds a $\QQ B^4$.
	
		\begin{figure}[t]
		\centering
		\begin{subfigure}{\textwidth}
			\centering
			\vspace{.2cm}
			\includegraphics[scale=.6]{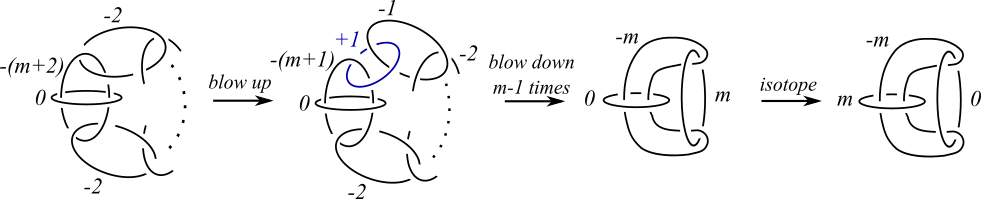}
			\caption{Surgery diagrams for $\textbf{T}_{A(\textbf{a})}$, where $\textbf{a}=(m+2,2^{[m-1]})$ and $m\ge 2$.} \label{2comps}
			\vspace{.5cm}
		\end{subfigure}
		\begin{subfigure}{\textwidth}
		\centering
		\vspace{.2cm}
		\includegraphics[scale=.6]{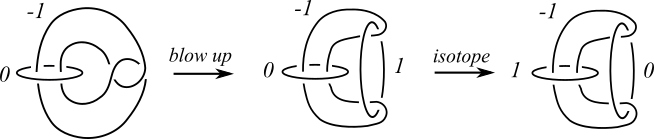}
		\caption{Surgery diagrams for $\textbf{T}_{A(3)}$.} \label{2compsa}
		\vspace{.5cm}
	\end{subfigure}
		\begin{subfigure}{\textwidth}
			\centering
			\vspace{.4cm}
			\includegraphics[scale=.65]{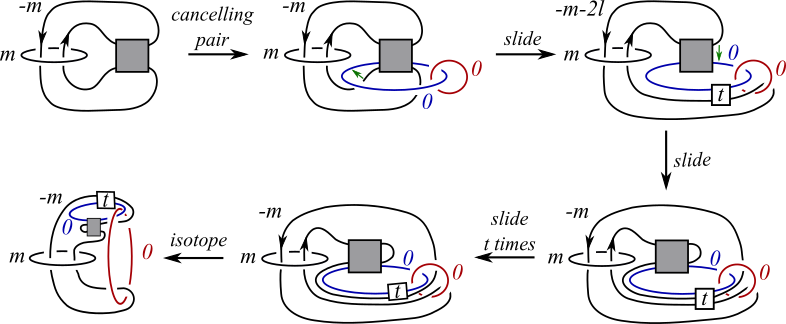}
			\caption{Showing that $Y$ is the boundary of a $\QQ B^4$} \label{2comps2}
		\end{subfigure}
		\caption{Proving $(-m,m)$-surgery along certain 2-component links (see Corollary \ref{rationalballcor}) bound $\QQ B^4s$.}
	\end{figure}
	
	Consider the surgery diagram of $Y$ shown in the top-left of Figure \ref{2comps2}. Let $L_1$ denote the $m$-framed unknot and let $L_2$ denote the complicated $-m$-framed unknot. The gray box contains two arcs that are arbitrarily knotted in a way yielding an unknot. Since $L_2$ is connected and the linking number of $L_1$ and $L_2$ is 0, the arc that begins near the bottom-right corner of the gray box must end near the bottom-left corner of the gray box; we will refer to this arc at the \textit{first arc}. The \textit{second arc} is the arc beginning at near the top-left corner and ending near the top-right corner. 
	
	We now add a canceling pair of 0-framed unknots to the surgery diagram as in the next diagram of Figure \ref{2comps2}. The blue unknot travels through the gray box parallel to the first arc such that it can be identified with the blackboard framing of the first arc. Pick an orientation for the blue unknot and let $l$ be the linking number of the blue unknot with $L_2$ and let $-t$ denote the writhe of the blue unknot (which equals the writhe of the first arc). Slide $L_2$ over the blue unknot, as indicated by the green arrow in the top-center diagram of Figure \ref{2comps2} to obtain the next diagram in Figure \ref{2comps2}, where the box labeled $t$ indicates $t$ full twists. Next, slide $L_2$ over the blue unknot again, as indicated by the second green arrow, to obtain the surgery diagram in the bottom-right of Figure \ref{2comps2}. Note that there are once again two arcs of $L_2$ passing through the gray box, which can be viewed as copies of the original two arcs. Thus $L_2$ is still an unknot and, moreover, the arc formed by the two arcs in the gray box along with the vertical strand connecting the endpoints of the arcs on the right side of the gray box can be isotoped to be an arc with no over- or under-crossings. 
	
	There are two strands of $L_2$ passing through the red unknot with opposite orientation. Slide the lower strand over the red unknot $t$ times to obtain the diagram in the bottom-center, which is isotopic to the diagram in the bottom-left of Figure \ref{2comps2}. Notice that without the blue unknot, this last surgery diagram is the surgery diagram of $\textbf{T}_{A(\textbf{a})}$ we found in Figure \ref{2comps}. Thus $Y$ is obtained from $\textbf{T}_{A(\textbf{a})}$ by performing 0-surgery along a knot with infinite order in $H_1(\textbf{T}_{A(\textbf{a})};\ZZ)$. Since $\textbf{T}_{A(\textbf{a})}$ bounds a $\QQ S^1\times B^3$, $Y$ bounds a $\QQ B^4$ by Lemma \ref{rationalballobstruction}.
	
	Finally, part (a) follows from part (b). Let $K$ be an unknotting number one knot with a positive crossing that can be changed to unknot $K$ and consider $S^3_{-1}(K)$. Blow up the positive crossing with a $+1$-framed unknot to obtain $(-1,+1)$-surgery along a two component link satisfying the conditions of part (b) (c.f. the top-left diagram in Figure \ref{2comps2}); it follows that $S^3_{-1}(K)$ bounds a $\QQ B^4$. Similarly, if $K$ is an unknotting number one knot with a negative crossing that can be changed to unknot $K$, then $S^3_{1}(K)$ bounds a $\QQ B^4$.
\end{proof}

\begin{remark} The $\QQ B^4s$ constructed in parts $(c)$ and $(d)$ of Corollary \ref{rationalballcor} are constructed by attaching a 2-handle along a knot that is homologous to the meridian of the $0$-framed unknot in the surgery diagram of some torus bundle. The $\QQ B^4s$ constructed in parts $(a)$ and $(b)$ rely on 2-handles attached along knots in more interesting ways, relative to the surgery diagram of $\textbf{T}_{A(\textbf{a})}$. These are just a few examples of infinite families of $\QQ S^3s$ that can be shown to bound $\QQ B^4s$. Using these techniques, one can construct many more, potentially interesting, examples.\end{remark}

The following proposition finishes the proof of Theorem \ref{thm1}.

\begin{prop} Let $K_n$ be a twist knot. If $n$ is odd, then $S^3_{-1}(K_n)$ has nontrivial Rohlin invariant. Consequently, $S^3_{-1}(K_n)$ is a nontrivial element in $\text{ker}(\Theta_{\ZZ}^3\to\Theta_{\QQ}^3)$.\label{kernelprop}\end{prop}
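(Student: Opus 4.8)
The plan is to establish the two claims in sequence: first that $S^3_{-1}(K_n)$ has nontrivial Rohlin invariant for $n$ odd, and then to deduce the kernel statement from this. Since we have already shown in Theorem \ref{thmcombined}(a) that $S^3_{-1}(K_n)$ bounds a $\QQ B^4$, the real content of the Proposition is the Rohlin invariant computation, together with the standard fact that a nonzero Rohlin invariant obstructs bounding a $\ZZ B^4$.

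\textbf{Computing the Rohlin invariant.} The Rohlin invariant $\mu(S^3_{-1}(K_n))\in\ZZ_2$ of an integer homology sphere is computed as $\mu = \frac{\sigma(V)}{8} \pmod 2$, where $V$ is any compact spin 4-manifold with $\partial V = S^3_{-1}(K_n)$ and $\sigma(V)$ is its signature. First I would produce a convenient spin filling. The manifold $S^3_{-1}(K_n)$ is obtained by attaching a single $-1$-framed 2-handle to $B^4$ along $K_n$; call the resulting 4-manifold $V$. Since the framing is odd, $V$ is not automatically spin, but because $S^3_{-1}(K_n)$ is an integer homology sphere the Rohlin invariant is well-defined, and I would compute it via the Casson-Gordon/Kirby-Melvin type formula relating the Rohlin invariant of $\pm1$-surgery on a knot to the Arf invariant of the knot. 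Specifically, for $S^3_{-1}(K)$ one has $\mu(S^3_{-1}(K)) = \mathrm{Arf}(K) \pmod 2$, so the task reduces to showing $\mathrm{Arf}(K_n)=1$ precisely when $n$ is odd.

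\textbf{The Arf invariant of the twist knots.} The Arf invariant of a twist knot is classical and can be read off from the Alexander polynomial via $\mathrm{Arf}(K) \equiv a_2(K) \pmod 2$, where $a_2$ is the coefficient of $z^2$ in the Conway polynomial; equivalently $\mathrm{Arf}(K)=0$ iff $\Delta_K(-1)\equiv \pm1 \pmod 8$. For the twist knot $K_n$ with $n$ full twists, a direct Seifert matrix computation (the genus-one Seifert surface has a $2\times 2$ Seifert form determined by $n$ and the clasp) gives the determinant and $a_2$ as explicit linear functions of $n$, and the parity of $a_2(K_n)$ alternates with the parity of $n$. I would carry out this small Seifert-form calculation to confirm $\mathrm{Arf}(K_n)=1$ exactly when $n$ is odd, whence $\mu(S^3_{-1}(K_n))=1$ for $n$ odd.

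\textbf{Concluding the kernel statement.} The Rohlin invariant is an integer homology cobordism invariant and vanishes on the boundary of any spin (in particular any integer homology) 4-ball; thus $\mu\neq 0$ shows $S^3_{-1}(K_n)$ does not bound a $\ZZ B^4$ and is not even integer homology cobordant to $S^3$, i.e. it is a nontrivial element of $\Theta^3_\ZZ$. Combined with Theorem \ref{thmcombined}(a), which gives that $S^3_{-1}(K_n)$ bounds a $\QQ B^4$ and hence is trivial in $\Theta^3_\QQ$, this exhibits $S^3_{-1}(K_n)$ as a nontrivial element of $\mathrm{ker}(\Theta^3_\ZZ\to\Theta^3_\QQ)$ for every odd $n$. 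The main obstacle I anticipate is pinning down the precise normalization of the Rohlin-invariant-versus-Arf-invariant correspondence and confirming the parity of the Arf invariant of $K_n$ from its Seifert form; once that parity computation is correct the rest is formal.
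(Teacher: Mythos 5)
Your proposal is correct, but it computes the Rohlin invariant by a genuinely different route than the paper. The paper's proof is a hands-on Kirby calculus argument: for $n$ odd it converts the surgery diagram of $S^3_{-1}(K_n)$ into surgery along a link with all even coefficients (Figure \ref{rohlin}), observes that the associated 2-handlebody $X$ is spin because all framings are even (Proposition 5.7.1 of \cite{stipgompf}), computes $\sigma(X)=8$, and concludes $\mu=1$ directly from the definition. You instead invoke the classical surgery formula $\mu\bigl(S^3_{1/m}(K)\bigr)\equiv m\cdot\mathrm{Arf}(K)\pmod 2$ (Gonzalez-Acu\~na; see also Freedman--Kirby), reducing everything to the statement $\mathrm{Arf}(K_n)=1$ for $n$ odd, which follows from the genus-one Seifert matrix $\left[\begin{smallmatrix}\epsilon & 1\\ 0 & n\end{smallmatrix}\right]$ giving $a_2(K_n)=\pm n$, so $\mathrm{Arf}(K_n)\equiv n\pmod 2$ regardless of the clasp sign --- which also disposes of your worry about normalization, since parity is insensitive to the sign of the surgery coefficient. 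Your route buys brevity and generality: it makes transparent that the only input is the Arf invariant, so the same argument shows that $\mp 1$-surgery on \emph{any} unknotting number one knot with Arf invariant one (and a suitable crossing) yields a nontrivial element of $\mathrm{ker}(\Theta_{\ZZ}^3\to\Theta_{\QQ}^3)$, not just the twist knots. The paper's route buys self-containedness: it needs no citation beyond Rohlin's theorem and the even-framing spin criterion, at the cost of an explicit diagrammatic computation. The concluding step --- Rohlin invariant is an integer homology cobordism invariant vanishing on boundaries of $\ZZ B^4s$, combined with Theorem \ref{thmcombined}(a) --- is identical in both arguments.
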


\begin{proof}
	Note that $K_n$ has unknotting number one and has a positive crossing that can be changed to unknot it, for all $n$. Thus by Theorem \ref{thmcombined}(a),  $S^3_{-1}(K_n)$ bounds a $\QQ B^4$ for all $n$. To show $S^3_{-1}(K_n)$ has nontrivial Rohlin invariant $\mu$ for odd $n$, we simply use Kirby Calculus to find a spin 4-manifold bounded by $S^3_{-1}(K_n)$ and calculate its signature. Figure \ref{rohlin} shows that $S^3_{-1}(K_n)$ can be described by surgery along a link with even surgery coefficients, when $n$ is odd. Let $X$ be the 4-manifold with handlebody diagram given by the last surgery diagram in Figure \ref{rohlin}. Since $X$ is built out of 2-handles with even framings, $X$ is spin (Proposition 5.7.1 in \cite{stipgompf}). Moreover, it is easy to check that $\sigma(X)=8$. Thus $\mu(S^3_{-1}(K_n))=1$.
	
		\begin{figure}
		\centering
		\includegraphics[scale=.6]{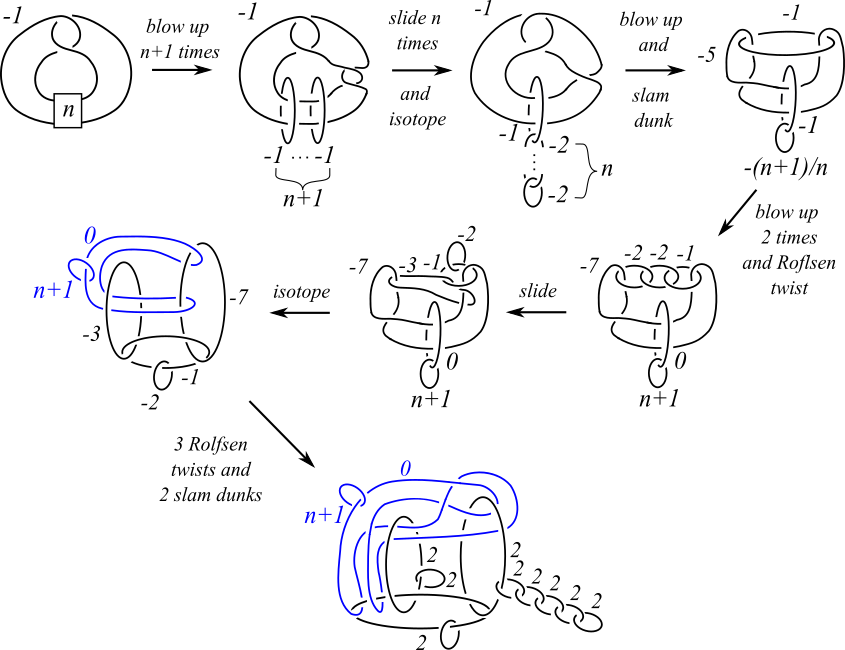}
		\caption{For $n$ odd, $S^3_{-1}(K_n)$ bounds a spin 4-manifold $X$ with $\sigma(X)=8$.}
		\label{rohlin}
	\end{figure}

\end{proof}

\section{Torus bundles bounding rational homology circles}\label{torusbundles}

In this section, we will prove Lemma \ref{parlem} and Theorem \ref{hypthm}. We start with the former, which is a simple observation. \\

\noindent \textbf{Lemma \ref{parlem}.} \textit{All negative hyperbolic torus bundles bound $\QQ S^1\times B^3s$.}

\begin{proof} Consider the obvious handlebody diagram of the cyclic plumbing bounded by the negative parabolic torus bundle with monodromy $-T^n$, show in the left of Figure \ref{parabolicrationalcircles}. After surgering, as in Figure \ref{parabolicrationalcircles}, we obtain the handlebody diagram of a 4-manifold whose boundary is the same parabolic torus bundle. A quick homology calculation shows that the new 4-manifold has the rational homology of $S^1\times B^3$.\end{proof}

\begin{figure}[h!]
	\centering
	\includegraphics[scale=.55]{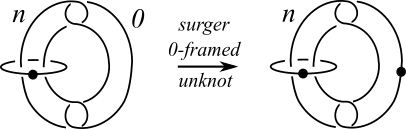}
	\caption{Negative parabolic torus bundles bound $\QQ S^1\times B^3s$.}\label{parabolicrationalcircles}
\end{figure}

We now turn our efforts to proving Theorem \ref{hypthm}, which we break into two propositions. We will first show that the hyperbolic torus bundles of Theorem \ref{hypthm} indeed bound $\QQ S^1\times B^3s$. Afterwards we will show that every handlebody decomposition of any $\QQ S^1\times B^3$ bounded by such a hyperbolic torus bundle necessarily contain 3-handles, unlike the $\QQ S^1\times B^3s$ bounded by the parabolic torus bundles of Lemma \ref{parlem}. Before diving into the proof of the first proposition, we need a quick definition and some background.

Let $(b_1,\ldots,b_k)$ be a string of integers such that $b_i\ge2$ for all $i$. If $b_j\ge 3$ for some $j$, then we can write this string in the form $(2^{[m_1]},3+n_1,\ldots,2^{[m_j]},2+n_j)$, where $m_i,n_i\ge 0$ for all $i$. The string $(c_1,\ldots,c_l)=(2+m_1, 2^{[n_1]},2+m_2,\ldots,3+m_j,2^{[n_j]})$ is called the \textit{dual string} of $(b_1,\ldots,b_k)$. If $b_i=2$ for all $1\le i\le k$, then we define its dual string to be $(k+1)$. As a topological interpretation of this, by Neumann \cite{neumann}, if $P$ is a linear plumbing of $D^2$-bundles over $S^2$ with Euler numbers $(b_1,\ldots,b_k)$, then the reversed-orientation plumbing $\overline{P}$ is a linear plumbing with Euler numbers $(c_1,\ldots,c_l)$. Moreover, the obvious handlebody diagram of $\overline{P}$ can be obtained from the obvious handlebody diagram of $P$ by performing suitable blowups and blowdowns. This procedure will be used in the proof of the following proposition.

\begin{prop} Let $\textbf{a}=(3+x_1,2^{[x_2]},\ldots,3+x_{2k+1},2^{[x_1]}, 3+x_2,2^{[x_3]},\ldots,3+x_{2k},2^{[x_{2k+1}]})$, where $k\ge0$ and $x_i\ge0$ for all $i$. Then $\textbf{T}_{A(\textbf{a})}$ bounds a rational homology circle $W$ with $H^3(W)=\ZZ_2$.\label{hypprop}\end{prop}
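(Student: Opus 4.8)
The plan is to produce $W$ by an explicit handle-calculus argument, starting from the standard negative-definite cyclic plumbing whose boundary is $\textbf{T}_{A(\textbf{a})}$ and trading handles until the rational homology collapses to that of $S^1\times B^3$. First I would record the obvious handlebody diagram coming from the cyclic plumbing of Figure \ref{hypsurgery}: a cyclic chain of $n$ unknots with framings $-a_1,\dots,-a_n$, which we may equivalently present as a linear chain closed up by a single $1$-handle. Since this plumbing $P_{\textbf{a}}$ satisfies $b_1(P_{\textbf{a}})=1$ (the class running around the cycle) and $b_2(P_{\textbf{a}})=n$ (the base spheres), the entire task is to excise all of the rational second homology while preserving both the boundary and the single free class in $H_1$. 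A preliminary observation I would verify is that the string $\textbf{a}$ is, read cyclically, a palindrome (the reverse of $\textbf{a}$ is a cyclic rotation of $\textbf{a}$), which reflects an amphichiral-type symmetry of the plumbing graph; this symmetry is the analogue, for the whole family, of the strong negative amphichirality of the figure-eight knot that makes the base case $k=0,\ x_1=0$, where $\textbf{a}=(3)$ and $\textbf{T}_{A((3))}$ is $0$-surgery on the figure-eight, bound a $\QQ S^1\times B^3$.

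The central tool is the dual-string machinery of Neumann \cite{neumann} recalled just before the statement: reversing the orientation of a linear subchain replaces its weights by the dual string, and this is realized concretely by a prescribed sequence of blowups and blowdowns on the diagram. Interleaving the blocks $3+x_{\mathrm{odd}}$ with the runs $2^{[x_{\mathrm{even}}]}$ is exactly the block pattern governed by duality, so I would use these moves along one arc of the necklace to ``unwind'' it against the complementary arc. The effect should be to cancel the base-sphere $2$-handles in pairs across the cycle, leaving a handlebody with the original $1$-handle, a small residual collection of $2$-handles whose linking matrix is rationally unimodular, and one $3$-handle that absorbs the leftover $2$-sphere class. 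This last step is where the $3$-handle (studied in the companion proposition) enters, and it is precisely what distinguishes the hyperbolic construction from the simpler surgery used for the parabolic family in Lemma \ref{parlem}.

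Having reached such a diagram, I would compute homology directly from the handle presentation. A Mayer--Vietoris (or handle long-exact-sequence) calculation should confirm $H_1(W;\QQ)=\QQ$ and $H_2(W;\QQ)=0$, so that $W$ is a $\QQ S^1\times B^3$. For the torsion assertion I would use Poincar\'e--Lefschetz duality $H^3(W)\cong H_1(W,\partial W)$ and read the answer off the presentation matrix of the final diagram; the single surviving even invariant factor, arising from the $3$-handle together with the parity built into the $3+x_i$ blocks, should yield exactly $H^3(W)=\ZZ_2$ rather than a larger or different finite group.

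The main obstacle is the Kirby-calculus bookkeeping for the full two-parameter family. Verifying that the dual-string cancellations terminate in a diagram with vanishing rational $H_2$ for \emph{all} $k$ and all $x_i$ simultaneously---rather than only in small sample cases---requires getting the indices of the interleaved $3+x_i$ and $2^{[x_i]}$ blocks to match up correctly under duality, so that every handle on one arc finds its cancelling partner; this combinatorial matching is the delicate heart of the argument. Pinning down the integral torsion precisely, so as to obtain $\ZZ_2$ and not merely ``some $2$-torsion,'' is the secondary difficulty and is what ultimately feeds the later obstruction showing that $3$-handles cannot be avoided.
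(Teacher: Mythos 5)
Your strategy has a concrete, fatal obstruction, and it is one the paper itself proves. You propose to reach $W$ by boundary-preserving Kirby moves (blowups, blowdowns, handle slides) on the cyclic plumbing, ending with a diagram containing ``one 3-handle that absorbs the leftover 2-sphere class.'' But blowups, blowdowns, and slides never change the boundary 3-manifold, so at every stage of your process the boundary is still $\textbf{T}_{A(\textbf{a})}$; and a 3-handle can only be attached along a non-separating 2-sphere in the boundary. Torus bundles over $S^1$ are irreducible, so $\textbf{T}_{A(\textbf{a})}$ contains no such sphere, and the 3-handle attachment you invoke cannot be performed. To build a 4-manifold with 3-handles and boundary $\textbf{T}_{A(\textbf{a})}$, the 2-handle stage must instead have boundary $\textbf{T}_{A(\textbf{a})}\#(\#_k\, S^1\times S^2)$, which requires attaching extra 2-handles (e.g.\ 0-framed ones along suitable knots) that do change the boundary---and your sketch supplies no such handles and no mechanism for finding them. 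Worse, if your cancellation scheme could somehow be made to work without 3-handles, it would produce a $\QQ S^1\times B^3$ admitting a handle decomposition with no 3-handles, contradicting Proposition \ref{3handlesprop} (the second half of Theorem \ref{hypthm}). So the ``delicate combinatorial matching'' you defer is not merely delicate: in the form described it is impossible. The torsion claim $H^3(W)=\ZZ_2$ is likewise unsupported, since it is to be read off a final diagram that cannot exist.

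The paper's proof avoids handle calculus for the construction entirely. You correctly spotted the two key ingredients---the cyclically palindromic structure of $\textbf{a}$ and the dual-string moves of Neumann---but they are used for a different purpose: the blowups/blowdowns convert the standard diagram of $\textbf{T}_{A(\textbf{a})}$ into $(-d_1,\ldots,-d_p,d_1,\ldots,d_p)$-surgery on a chain link $L$ together with a 0-framed knot $K$, and the symmetry of these coefficients yields an orientation-reversing involution of $S^3_{\textbf{d}}(L)$ carrying $K$ to itself with two fixed points on $K$. Thus $K$ is strongly negative amphicheiral, and Kawauchi's Lemma 2.3 in \cite{kawauchinegamph} then hands you the conclusion: 0-surgery on such a knot bounds a rational homology circle $W$ with $H^3(W;\ZZ)=\ZZ_2$, realized as the mapping cylinder of the quotient map $\textbf{T}_{A(\textbf{a})}\to \textbf{T}_{A(\textbf{a})}/\tau'$ by the induced free orientation-reversing involution $\tau'$. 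This mapping-cylinder description is exactly what lets $W$ exist even though every handle decomposition of it requires 3-handles; any proof that tries to exhibit $W$ by an explicit handle diagram is fighting against that fact. If you want to salvage your outline, redirect the dual-string moves toward producing the symmetric chain-link diagram and then invoke the involution/Kawauchi argument, rather than toward cancelling handles.
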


\begin{proof} 
	First suppose $\textbf{a}=(3)$. The left diagram in Figure \ref{figure8} is the obvious surgery diagram of $\textbf{T}_{A(3)}$ obtained from the plumbing diagram. By blowing down the $-1$-framed unknot, it is easy to see that $\textbf{T}_{A(3)}$ can realized as 0-surgery on the figure-eight knot. As mentioned in the introduction, it is well-known that 0-surgery on the figure-eight knot bounds a $\QQ S^1\times B^3$.

	Now assume $(k+\sum_{i=1}^{2k+1}x_i)\ge1$ and let $\textbf{a}=(3+x_1,2^{[x_2]},\ldots,3+x_{2k+1},2^{[x_1]}, 3+x_2,2^{[x_3]},\ldots,3+x_{2k},2^{[x_{2k+1}]})$. For simplicity, relabel the surgery coefficients $(-(3+x_1),-2^{[x_2]},\ldots,-(3+x_{2k+1}))$ by $(-(d_1+1),-d_2,\ldots,-d_{p-1},-(d_p+1))$. Then the coefficients $(2^{[x_1]},3+x_2,\ldots,2^{[x_{2k+1}]})$ are of the form $(e_1,\ldots, e_q)$, where $(d_1,\ldots, d_p)$ and $(e_1,\ldots,e_q)$ are dual strings. Consider the obvious surgery diagram for $\textbf{T}_{A(\textbf{a})}$ shown in Figure \ref{originalsurgery}. Blow up the linking of the $-(d_1+1)$- and $-e_q$-framed unknots with a $+1$-framed unknot and then consecutively blow down the $-1$-framed unknots. Continuing in this way---performing $+1$-blowups followed by $-1$-blowdowns---we will obtain a surgery diagram involving  $(-d_1,\ldots, -d_p, d_1,\ldots, d_p)$-surgery along a ``chain link" as in Figure \ref{newsurgery}. Let $L$ denote this chain link.
	
	\begin{figure}[t!]
		\centering
		\includegraphics[scale=.6]{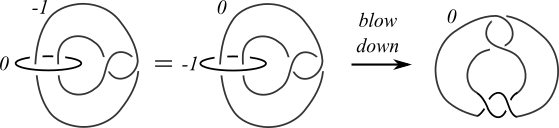}
		\caption{The hyperbolic torus bundle $\textbf{T}_{A(3)}$}\label{figure8}
	\end{figure}
	
	\begin{figure}
		\centering
		\begin{subfigure}{.45\textwidth}
			\centering
			\includegraphics[scale=.5]{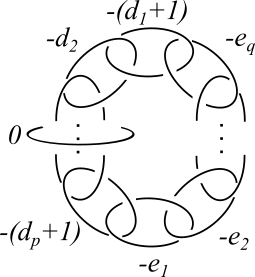}
			\caption{Relabeling the coefficients}\label{originalsurgery}
				\vspace{.3cm}
		\end{subfigure}
		\begin{subfigure}{.45\textwidth}
			\centering
			\includegraphics[scale=.5]{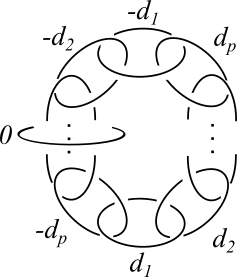}
			\caption{Alternate surgery diagram of $\textbf{T}_{A(\textbf{a})}$}\label{newsurgery}
				\vspace{.3cm}
		\end{subfigure}
		\begin{subfigure}{.9\textwidth}
			\centering
			\includegraphics[scale=.5]{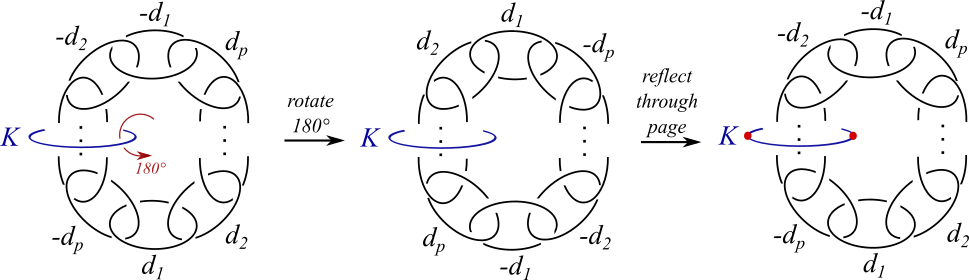}
			\caption{An orientation reversing involution $\tau'$ of $\textbf{T}'$ that maps $K$ to itself. The two red points on $K$ are the fixed points of $\tau'$.}\label{involution}
		\end{subfigure}
		\caption{The hyperbolic torus bundles $\textbf{T}_{A(\textbf{a})}$ with $\textbf{a}=(3+x_1,2^{[x_2]},\ldots,3+x_{2k+1},2^{[x_1]}, 3+x_2,2^{[x_3]},\ldots,3+x_{2k},2^{[x_{2k+1}]})$ admit orientation-reversing involutions and consequently bound $\QQ S^1\times B^3s$}
	\end{figure}
	
Let $S^3_{\textbf{d}}(L)$ be the 3-manifold obtained by $\textbf{d}=(-d_1,\ldots, -d_p, d_1,\ldots, d_p)$-surgery long $L$ and let $K\subset S^3_{\textbf{d}}(L)$ be the knot shown in the leftmost diagram in Figure \ref{involution}. We claim that there is an orientation-reversing involution $\tau$ of $S^3_{\textbf{d}}(L),$ fixing $K$, with fixed point set $S^0\subset K$. This is achieved by performing a $180^{\circ}$ rotation of $S^3$ about $K$ so that $K$ is fixed (as shown in Figure \ref{involution}) and then reflecting the chain link through the page so that $K$ maps to itself and $\tau$ has two fixed points, which both lie on $K$ (see Figure \ref{involution}). Since $K$ maps to itself under this involution, $K$ is, by definition, a strongly negative amphicheiral knot in $S^3_{\textbf{d}}(L)$. By Lemma 2.3 in \cite{kawauchinegamph}, performing 0-surgery along $K$ yields a $\QQ S^1\times S^2$ that bounds a rational homology circle $W$ such that $H^3(W;\ZZ)=\ZZ_2$. More precisely, $W$ is the mapping cylinder of $p: \textbf{T}_{A(\textbf{a})}\to\textbf{T}_{A(\textbf{a})}/\tau'$, where $\tau'$ is the fixed point free orientation reversing involution of $\textbf{T}_{A(\textbf{a})}$ inherited from $\tau$.
\end{proof}

Before proving that every handlebody decomposition of every $\QQ S^1\times B^3$ bounded by one of the hyperbolic torus bundles listed in Theorem \ref{hypthm} necessarily contains 3-handles (Proposition \ref{3handlesprop} below), we must first gather a few technical results.

\begin{lem} If $Y$ bounds a $\QQ S^1\times B^3$, then the torsion part of $H_1(Y;\ZZ)$ has square order.\label{squarelem}\end{lem}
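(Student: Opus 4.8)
The plan is to exhibit a metabolizer for the torsion linking form of $Y$ coming from the filling $W$. Since $Y=\partial W$ is a closed oriented $3$-manifold that is a $\QQ S^1\times S^2$, its torsion linking form $\lambda\colon T\times T\to\QQ/\ZZ$, where $T=\operatorname{Tors}H_1(Y;\ZZ)$, is nonsingular (Poincar\'e duality together with the universal coefficient theorem). For any finite abelian group equipped with a nonsingular $\QQ/\ZZ$-valued form, a subgroup $G$ satisfying $G=G^\perp$ forces $|T|=|G|\cdot|G^\perp|=|G|^2$; so it suffices to produce such a self-annihilating, self-dual subgroup. First I would set up the homology: because $H_*(W;\QQ)\cong H_*(S^1;\QQ)$, Lefschetz duality gives $H_1(W,Y;\QQ)\cong H^3(W;\QQ)=0$, so the long exact sequence of $(W,Y)$ shows that $i_*\colon H_1(Y;\QQ)\to H_1(W;\QQ)$ is an isomorphism of one-dimensional spaces. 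Hence the free part of $H_1(Y;\ZZ)$ survives into $W$, the kernel of $i_*\colon H_1(Y;\ZZ)\to H_1(W;\ZZ)$ is entirely torsion, and I may set $G:=\ker(i_*)\subseteq T$.

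Next I would verify that $G\subseteq G^\perp$, i.e. $\lambda|_G\equiv 0$. Given $a,b\in G$, each is represented by a curve in $Y$ that bounds a rational $2$-chain in $W$; running the standard computation that identifies linking in the boundary with intersection in the filling, $\lambda(a,b)$ is computed as the intersection number of a rational bounding chain for $a$ with one for $b$, which lands in $\ZZ$ and therefore vanishes in $\QQ/\ZZ$. This step is routine.

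The crux is upgrading self-annihilation to $G=G^\perp$, equivalently $|G|^2=|T|$. Writing $I=\operatorname{im}(i_*|_T)$, the first isomorphism theorem gives $|T|=|G|\cdot|I|$ and nonsingularity gives $|G^\perp|=|T|/|G|=|I|$; thus $G=G^\perp$ is equivalent to $|I|=|G|$. Here the topology must do the real work: applying Poincar\'e--Lefschetz duality ($H_k(W,Y)\cong H^{4-k}(W)$, $H_k(W)\cong H^{4-k}(W,Y)$) and the universal coefficient theorem to the long exact sequence of $(W,Y)$ renders the sequence self-dual, and a diagram chase should identify the image $I$ with $\operatorname{Hom}(G,\QQ/\ZZ)$, yielding $|I|=|G|$. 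Combined with $G\subseteq G^\perp$ this forces $G=G^\perp$, hence $|T|=|G|^2$ is a perfect square.

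I expect this final duality-and-counting step to be the main obstacle, as it requires carefully matching the torsion subquotients at the two ends of the exact sequence of the pair. The hypothesis $H_*(W;\QQ)\cong H_*(S^1;\QQ)$ enters precisely here: it guarantees that the single free class of $H_1(Y)$ maps isomorphically to $H_1(W)$ and therefore contributes nothing to the torsion bookkeeping, so that the entire discrepancy between $T$ and $G$ is governed by the self-dual torsion pairing.
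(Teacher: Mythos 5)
Your proposal is correct and follows essentially the same route as the paper: the paper's proof just cites Casson--Gordon's Lemma 3 (the $\QQ S^3$-bounding-$\QQ B^4$ case, proved by ``exploring the long exact sequence of the pair'') and asserts that the analogous argument works for $\QQ S^1\times S^2$s bounding $\QQ S^1\times B^3$s, and your metabolizer argument---$G=\ker(i_*)$ is torsion, self-annihilating, and of order $\sqrt{|T|}$ by Poincar\'e--Lefschetz duality plus the universal coefficient theorem applied to the sequence of $(W,Y)$---is exactly that analogous argument carried out. One small correction: in the self-annihilation step you should use \emph{integral} bounding $2$-chains (which exist precisely because $G$ is the kernel of the integral map $i_*$), since rational chains only give rational intersection numbers.
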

\begin{proof}
	It is well-known that if a $\QQ S^3$ bounds a $\QQ B^4$, then its first homology group has square order (see, for example, Lemma 3 in \cite{cassongordon86}). The proof involves exploring the long exact sequence of the pair. An analogous argument shows that the same is true of $\QQ S^1\times S^2$s that bound $\QQ S^1\times B^3s$.
\end{proof}

\begin{lem}[Lemma 10 in \cite{sakuma}] $|\textup{Tor}(H_1(\textbf{T}_{ A(\textbf{a})};\ZZ))|=\textup{tr}(A(\textbf{a}))-2$.\label{order}
\end{lem}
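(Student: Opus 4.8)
The plan is to compute $H_1(\textbf{T}_{A(\textbf{a})};\ZZ)$ directly from the mapping-torus structure and read off its torsion. Writing $A = A(\textbf{a})$ and regarding $\textbf{T}_{A(\textbf{a})}$ as the mapping torus of the self-homeomorphism of $T^2$ induced by $A$, I would invoke the Wang exact sequence of the fibration $T^2 \hookrightarrow \textbf{T}_{A(\textbf{a})} \to S^1$:
\[
H_1(T^2) \xrightarrow{\,A_\ast - \id\,} H_1(T^2) \to H_1(\textbf{T}_{A(\textbf{a})}) \to H_0(T^2) \xrightarrow{\,A_\ast - \id\,} H_0(T^2).
\]
Since $T^2$ is connected, $A_\ast$ acts as the identity on $H_0(T^2)\cong\ZZ$, so the rightmost map vanishes; and on $H_1(T^2)\cong\ZZ^2$ the map $A_\ast$ is given by the integer matrix $A$. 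This produces a short exact sequence
\[
0 \to \operatorname{coker}(A - I) \to H_1(\textbf{T}_{A(\textbf{a})}) \to \ZZ \to 0,
\]
which splits because $\ZZ$ is free, giving $H_1(\textbf{T}_{A(\textbf{a})}) \cong \operatorname{coker}(A-I) \oplus \ZZ$, with the free summand coming from the base circle.

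Next I would compute $\det(A-I)$. Because $A\in SL(2,\ZZ)$ has $\det A = 1$, expanding the $2\times2$ determinant gives $\det(A-I) = \det A - \operatorname{tr} A + 1 = 2 - \operatorname{tr} A$. For a positive hyperbolic monodromy one has $\operatorname{tr} A > 2$, so $\det(A-I) = 2 - \operatorname{tr} A \neq 0$. A nonzero determinant forces $\operatorname{coker}(A-I) = \ZZ^2/(A-I)\ZZ^2$ to be a finite abelian group of order $|\det(A-I)| = \operatorname{tr} A - 2$, as one sees immediately from the Smith normal form of $A-I$. In particular $\operatorname{coker}(A-I)$ is pure torsion, so the torsion subgroup of the direct sum $\operatorname{coker}(A-I)\oplus\ZZ$ is exactly $\operatorname{coker}(A-I)$, yielding $|\operatorname{Tor}(H_1(\textbf{T}_{A(\textbf{a})};\ZZ))| = \operatorname{tr}(A(\textbf{a})) - 2$.

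The computation is essentially routine once the Wang sequence is set up, so I do not anticipate a genuine obstacle; the only points requiring care are conventional. One is the identification of $A_\ast$ on $H_1(T^2)$ with the matrix $A$ (and even a transpose convention would be harmless here, since $\det(A^\top - I) = \det(A-I)$). The other is confirming that the free $\ZZ$ summand splits off cleanly, so that the torsion of $H_1$ coincides with the finite group $\operatorname{coker}(A-I)$ rather than becoming entangled with the base $\ZZ$; both follow at once because $\ZZ$ is free and $\operatorname{coker}(A-I)$ is finite precisely when $\operatorname{tr} A \neq 2$, which holds in the hyperbolic case.
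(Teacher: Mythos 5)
Your proof is correct. Note that the paper itself gives no argument for this lemma at all: it is quoted as Lemma 10 of Sakuma's paper, so there is nothing internal to compare against, and a self-contained derivation like yours is exactly what one would insert if the citation were to be replaced by a proof. Your route is the standard one: the Wang sequence of the fibration $T^2 \hookrightarrow \textbf{T}_{A(\textbf{a})} \to S^1$ gives the split short exact sequence $0 \to \operatorname{coker}(A-I) \to H_1(\textbf{T}_{A(\textbf{a})};\ZZ) \to \ZZ \to 0$, and the determinant computation $\det(A-I) = \det A - \operatorname{tr}A + 1 = 2-\operatorname{tr}A$ (the characteristic polynomial of $A$ evaluated at $1$) identifies the torsion order as $|\operatorname{tr}A - 2|$. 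You are also right to flag, and correctly dispose of, the two conventional points: the hypothesis $\operatorname{tr}A > 2$, which holds because $\textbf{T}_{A(\textbf{a})}$ is by definition positive hyperbolic (and also for the squared monodromy $A(\textbf{a})^2$ to which the paper applies this lemma in Corollary 3.4), is precisely what guarantees $\operatorname{coker}(A-I)$ is finite and removes the absolute value; and the transpose ambiguity in identifying $A_\ast$ with $A$ is immaterial since $\det(A^\top - I)=\det(A-I)$. The one cosmetic caveat is that the sign convention $(\mathbf{x},1)\sim(\pm A\mathbf{x},0)$ in the paper means the general statement for negative-trace monodromies would need $|\operatorname{tr}A|-2$, but for the bundles $\textbf{T}_{A(\textbf{a})}$ covered by the lemma as stated, your argument is complete.
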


%\begin{proof}
%	Let $\textbf{a}=(a_1,\ldots,a_n)$. By Theorem 6.1 of \cite{neumann}, hyperbolic torus bundles are of the form $\textbf{T}_{A(\textbf{a})}=T^2\times[0,1]/(\textbf{x},1)\sim(A\textbf{x},0)$, where $$A=A(\textbf{a})=\begin{pmatrix}
%	p&q\\
%	-s&-r\\
%	\end{pmatrix}, \frac{p}{q}=[a_1,...,a_n], \textrm{ and } \frac{s}{r}=[a_1,...,a_{n-1}].$$
%	Note that, since $A\in SL_2(\ZZ)$, we have that $qs-pr=1$. Moreover, since $p>s>r$ and $a_i\ge 3$ for some $i$, we necessarily have that $p> r+2$.
%	
%	It is well known that $H_1(\textbf{T}_{A(\textbf{a})})\cong\ZZ\oplus\text{Coker}(A-I)$ (to see this, consider the Mayer-Vietoris long exact seqence for $(T^2\times I, T^2\times I, T^2\sqcup T^2)$). Let $S=\begin{bmatrix}s_1&0\\0&s_2\end{bmatrix}$ be the Smith normal form of $A-I$. Then $\text{Coker}(A-I)\cong \ZZ_{s_1}\oplus \ZZ_{s_2}$ and so $|\text{Coker}(A-I)|=|s_1s_2|$. By definition of the Smith normal form, $\det (A-I)=s_1s_2$. Thus, the torsion part of the first homology of $\textbf{T}_{A(\textbf{a})}$ has order $|\det (A-I)|=|(p-1)(-r-1)+sq|=|sq-pr-p+r+1|=|-p+r+2|=p-r-2$. Similarly, the torsion part of the first homology of $\textbf{T}_{-A(\textbf{a})}$ has order $|\det (-A-I)|=p-r+2$. Thus, $|\textup{Tor}(H_1(\textbf{T}_{A(\textbf{a})}))|=p-(r\pm2).$
%\end{proof}

\begin{cor} $|\textup{Tor}(H_1(\textbf{T}_{A(\textbf{a})^2};\ZZ))|$ is not a square for all $\textbf{a}$.\label{squarenotsquare}\end{cor}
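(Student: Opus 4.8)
The plan is to combine Lemma \ref{order} with elementary properties of traces of $SL(2,\ZZ)$ matrices. By Lemma \ref{order}, the torsion order in question equals $\operatorname{tr}(A(\textbf{a})^2) - 2$, so the corollary reduces to the purely matrix-theoretic assertion that $\operatorname{tr}(A(\textbf{a})^2) - 2$ is never a perfect square. First I would set $M = A(\textbf{a})$ and write $t = \operatorname{tr}(M)$. Since $M \in SL(2,\ZZ)$, the Cayley-Hamilton identity gives $M^2 = t\,M - I$, and taking traces yields $\operatorname{tr}(M^2) = t^2 - 2$. Hence the quantity to analyze is
\[
\operatorname{tr}(M^2) - 2 = t^2 - 4 = (t-2)(t+2).
\]
So the whole problem collapses to showing that $t^2 - 4$ is not a perfect square, where $t = \operatorname{tr}(A(\textbf{a}))$.

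Next I would invoke the hyperbolicity hypothesis that is implicit in the notation $\textbf{T}_{A(\textbf{a})}$: these are positive hyperbolic torus bundles, so $t = \operatorname{tr}(A(\textbf{a})) > 2$, and in fact $t \geq 3$ since the trace is an integer (indeed for the strings in Theorem \ref{hypthm} one has $a_i \geq 2$ with at least one $a_j \geq 3$, forcing $t \geq 3$). The elementary number-theoretic step is then to observe that for an integer $t \geq 3$, the value $t^2 - 4$ is strictly sandwiched between consecutive squares: since $t \geq 3$ we have $(t-1)^2 = t^2 - 2t + 1 < t^2 - 4 < t^2$, because $t^2 - 4 < t^2$ is clear and $t^2 - 4 > t^2 - 2t + 1$ is equivalent to $2t > 5$, which holds for $t \geq 3$. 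Thus $t^2 - 4$ lies strictly between the consecutive squares $(t-1)^2$ and $t^2$ and therefore cannot itself be a square.

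I expect no serious obstacle here; the argument is short once the trace identity $\operatorname{tr}(M^2) = t^2 - 2$ is in hand. The only point requiring a little care is pinning down that $t \geq 3$ (so that the consecutive-squares sandwich has room to work), which follows from the hyperbolic, positive nature of $A(\textbf{a})$ as described in the discussion preceding Lemma \ref{parlem}. One should also confirm that Lemma \ref{order} applies to $A(\textbf{a})^2$, i.e. that $\textbf{T}_{A(\textbf{a})^2}$ is the torus bundle with monodromy the square of $A(\textbf{a})$; since squaring preserves membership in $SL(2,\ZZ)$ and keeps the trace $t^2 - 2 \geq 7 > 2$ (still hyperbolic and positive), the lemma indeed gives $|\operatorname{Tor}(H_1(\textbf{T}_{A(\textbf{a})^2};\ZZ))| = \operatorname{tr}(A(\textbf{a})^2) - 2 = t^2 - 4$, completing the reduction.
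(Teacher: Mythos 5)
Your proposal is correct and follows essentially the same route as the paper: apply Lemma \ref{order} twice, compute that the torsion order of $\textbf{T}_{A(\textbf{a})^2}$ equals $t^2-4$ where $t=\operatorname{tr}(A(\textbf{a}))>2$, and conclude this is never a square. The only cosmetic differences are that you obtain $\operatorname{tr}(A(\textbf{a})^2)=t^2-2$ via Cayley--Hamilton rather than the paper's explicit matrix multiplication, and you spell out the consecutive-squares sandwich $(t-1)^2<t^2-4<t^2$ that the paper leaves implicit.
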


\begin{proof}
Let $A=A(\textbf{a})=\begin{bmatrix} a & b\\c & d\end{bmatrix}$, where $ad-bc=1$. Then by Lemma \ref{order}, $|\textup{Tor}(H_1(\textbf{T}_{ A(\textbf{a})};\ZZ))|=a+d-2$. Thus $A^2=\begin{bmatrix} a^2+bc & ab+bd \\ ac+cd & bc+d^2\end{bmatrix}$. Once again, by Lemma \ref{order}, $|\textup{Tor}(H_1(\textbf{T}_{ A(\textbf{a})^2};\ZZ))|=a^2+2bc+d^2-2=a^2+d^2+2(ad-1)-2=(a+d)^2-4$. Since $\textbf{T}_{A(\textbf{a})}$ is positive hyperbolic, $a+d>2$. Thus $(a+d)^2-4$ is not a square.
\end{proof}

\begin{lem} Let $W$ be a $\QQ S^1\times B^3$ admitting a handlebody decomposition without 3-handles and let $\widetilde{W}$ be an n-fold cover of $W$. If $\partial \widetilde{W}$ is a $\QQ S^1\times S^2$, then $\widetilde{W}$ is a $\QQ S^1\times B^3$.\label{coverlem}\end{lem}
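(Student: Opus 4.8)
The plan is to pin down the rational Betti numbers $b_i = \dim_{\QQ} H_i(\widetilde W;\QQ)$ and show they match those of $S^1$, namely $b_0 = b_1 = 1$ and $b_i = 0$ for $i \ge 2$. I would assemble this from three ingredients: the restriction on handle indices, the multiplicativity of the Euler characteristic under finite covers, and Lefschetz duality applied along the boundary.

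First I would record two structural facts. Since $\widetilde W \to W$ is a finite cover of a connected manifold, each component of $\widetilde W$ surjects onto $W$ and hence meets the preimage $\partial \widetilde W$ of $\partial W$; because $\partial \widetilde W$ is connected (it is a $\QQ S^1 \times S^2$), this forces $\widetilde W$ to be connected, so $b_0 = 1$. Next, because $W$ has a handle decomposition with handles of index at most $2$, this decomposition lifts to one of $\widetilde W$ with handles of index at most $2$ (each handle lifts to $n$ disjoint copies). Consequently $\widetilde W$ is homotopy equivalent to a $2$-complex, so $H_3(\widetilde W;\QQ) = H_4(\widetilde W;\QQ) = 0$, and by universal coefficients $H^3(\widetilde W;\QQ) = 0$ as well.

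With $b_3 = b_4 = 0$ in hand, I would bring in the Euler characteristic. A $\QQ S^1 \times B^3$ has $\chi = 1 - 1 = 0$, so $\chi(\widetilde W) = n\,\chi(W) = 0$; combined with $b_0 = 1$ and the vanishing in degrees $3$ and $4$, this yields $b_2 = b_1 - 1$, and in particular $b_1 \ge 1$. For the reverse inequality I would use Lefschetz duality and the long exact sequence of the pair $(\widetilde W, \partial \widetilde W)$: duality gives $H_1(\widetilde W, \partial \widetilde W;\QQ) \cong H^3(\widetilde W;\QQ) = 0$, so the inclusion-induced map $H_1(\partial \widetilde W;\QQ) \to H_1(\widetilde W;\QQ)$ is surjective; since $\partial \widetilde W$ is a $\QQ S^1 \times S^2$ we have $\dim_{\QQ} H_1(\partial \widetilde W;\QQ) = 1$, whence $b_1 \le 1$.

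Sandwiching the two inequalities gives $b_1 = 1$ and therefore $b_2 = 0$, so $\widetilde W$ has the rational homology of $S^1$ and is a $\QQ S^1 \times B^3$. I do not expect any single step to be a serious obstacle; the heart of the argument is the pincer that determines $b_1$ exactly, with the lower bound $b_1 \ge 1$ coming from $\chi = 0$ (which forces $b_2 = b_1 - 1 \ge 0$) and the upper bound $b_1 \le 1$ coming from the surjectivity off the boundary. The two points requiring care are that the bound on handle indices survives passage to the finite cover — this is what kills $H_3$ and $H_4$ upstairs and makes the Euler-characteristic bookkeeping valid — and the connectedness of $\widetilde W$, which I would deduce from that of its boundary as above.
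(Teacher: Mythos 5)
Your proposal is correct and follows essentially the same route as the paper's proof: lift the handle decomposition to kill $H_3(\widetilde W)$ (hence $H^3(\widetilde W;\QQ)$), use Lefschetz duality and the long exact sequence of the pair to get surjectivity of $H_1(\partial\widetilde W;\QQ)\to H_1(\widetilde W;\QQ)$ and thus $b_1\le 1$, then close with $\chi(\widetilde W)=n\chi(W)=0$. The only difference is cosmetic: you make the connectedness of $\widetilde W$ explicit, which the paper leaves implicit.
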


\begin{proof} Let $Y=\partial W$ and $\widetilde{Y}=\partial \widetilde{W}$. Since $W$ admits a handlebody decomposition without 3-handles, $\widetilde{W}$ also admits a handlebody decomposition without 3-handles. Thus $H_3(\widetilde{W};\ZZ)=0$. By Poincar{\'e} duality and the Universal Coefficient Theorem, we have that
	$$H_1(\widetilde{W},\widetilde{Y};\QQ)\cong H^3(\widetilde{W};\QQ)\cong \text{Hom}(H_3(\widetilde{W};\ZZ),\QQ)\oplus\text{Ext}(H_2(\widetilde{W};\ZZ),\QQ)=0.$$
	Thus the map $H_1(\widetilde{Y};\QQ)\to H_1(\widetilde{W};\QQ)$ induced by inclusion is surjective. Since $\widetilde{Y}$ is a $\QQ S^1\times S^2$, it follows that $\text{rank}(H_1(\widetilde{W};\QQ))\le 1$. Finally, since $\chi(\widetilde{W})=p\chi(W)=0$ and  $H_3(\widetilde{W};\QQ)=0$, we necessarily have that $H_1(\widetilde{W};\QQ)=\QQ$ and $H_2(\widetilde{W};\QQ)=0$. \end{proof}

\noindent Equipped with the above lemmas and corollary, we are now ready to finish the proof of Theorem \ref{hypthm}.

\begin{prop} Let $W$ be a $\QQ S^1\times B^3$ bounded by a hyperbolic torus bundle $\textbf{T}_{A(\textbf{a})}$. Then every handlebody decomposition of $W$ necessarily contains 3-handles.\label{3handlesprop}\end{prop}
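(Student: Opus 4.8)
The plan is to argue by contradiction. Suppose $W$ is a $\QQ S^1\times B^3$ with $\partial W=\textbf{T}_{A(\textbf{a})}$ that admits a handlebody decomposition with no 3-handles, and derive a contradiction with Corollary \ref{squarenotsquare} using the double cover associated to the free part of $H_1(W)$. Since $W$ is a $\QQ S^1\times B^3$, we have $H_1(W;\ZZ)\cong\ZZ\oplus T_W$ with $T_W$ torsion, and projection onto the free factor gives a surjection $\pi_1(W)\to\ZZ\to\ZZ/2$, hence a connected double cover $\pi:\widetilde W\to W$. The first task is to identify $\partial\widetilde W$. As a torus bundle, $H_1(\textbf{T}_{A(\textbf{a})};\ZZ)\cong\ZZ\oplus\text{coker}(A(\textbf{a})-I)$, where the free $\ZZ$ is generated by the base circle $\gamma$ and the finite torsion is carried by the $T^2$-fiber (finite because $A(\textbf{a})$ is hyperbolic, so $\det(A(\textbf{a})-I)=2-\text{tr}\,A(\textbf{a})\neq0$). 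I would show that the restricted double cover $\partial\widetilde W\to\textbf{T}_{A(\textbf{a})}$ is classified by the base-direction class, so that $\partial\widetilde W$ is the $2$-fold cyclic cover of $\textbf{T}_{A(\textbf{a})}$ in the $S^1$-direction, namely $\textbf{T}_{A(\textbf{a})^2}$.

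Granting this identification, the argument closes quickly. Since $\textbf{T}_{A(\textbf{a})^2}$ is a $\QQ S^1\times S^2$ and $W$ is assumed to have no 3-handles, Lemma \ref{coverlem} applies to the cover $\widetilde W$ and shows that $\widetilde W$ is itself a $\QQ S^1\times B^3$. Then Lemma \ref{squarelem}, applied to $\widetilde W$, forces $|\text{Tor}(H_1(\textbf{T}_{A(\textbf{a})^2};\ZZ))|$ to be a perfect square. This directly contradicts Corollary \ref{squarenotsquare}, which computes this order to be $(\text{tr}\,A(\textbf{a}))^2-4$ and shows it is never a square for a positive hyperbolic monodromy. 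Hence no 3-handle-free decomposition of $W$ can exist.

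I expect the identification $\partial\widetilde W=\textbf{T}_{A(\textbf{a})^2}$ to be the crux. The map $i_*:H_1(\textbf{T}_{A(\textbf{a})};\ZZ)\to H_1(W;\ZZ)$ sends $[\gamma]$ to $kg+\tau$, where $g$ generates the free part of $H_1(W)$, $\tau\in T_W$, and $k\neq0$ because $i_*$ is a rational isomorphism onto $H_1(W;\QQ)$ (this rational surjectivity follows from $H^3(W;\QQ)=0$ via the no-3-handles hypothesis and Lefschetz duality, exactly as in the proof of Lemma \ref{coverlem}). The restricted cover is connected and equal to $\textbf{T}_{A(\textbf{a})^2}$ precisely when the pullback class is nonzero mod $2$ and annihilates the fiber, i.e. when $k$ is odd. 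I would establish the needed parity by passing to the $\ZZ/2$-reduction of the long exact sequence of the pair $(W,\textbf{T}_{A(\textbf{a})})$: the fiber $T^2$ already maps into the torsion subgroup, so the base-dual class automatically kills it, and the no-3-handles hypothesis constrains $H_1(W,\partial W;\ZZ/2)$ enough to produce a class in $H^1(W;\ZZ/2)$ restricting to the base-dual class on the boundary. Producing this class, and thereby confirming that $\partial\widetilde W$ is the connected base-direction cover $\textbf{T}_{A(\textbf{a})^2}$ rather than the disconnected cover $\textbf{T}_{A(\textbf{a})}\sqcup\textbf{T}_{A(\textbf{a})}$, is the main obstacle; once it is in place, the contradiction above finishes the proof.
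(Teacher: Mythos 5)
Your proposal follows the same skeleton as the paper's proof: assume a 3-handle-free decomposition, pass to a double cover $\widetilde W\to W$ with $\partial\widetilde W=\textbf{T}_{A(\textbf{a})^2}$, promote $\widetilde W$ to a $\QQ S^1\times B^3$ via Lemma \ref{coverlem}, and contradict Lemma \ref{squarelem} using Corollary \ref{squarenotsquare}. The gap is precisely the step you flag as the ``main obstacle,'' and the inputs you actually establish are not enough to close it. You derive $i_*[\gamma]=kg+\tau$ with $k\neq 0$ from \emph{rational} surjectivity of $i_*$, but rational information can never detect the parity of $k$: if $k$ were even, your chosen cover would restrict to the disconnected cover $\textbf{T}_{A(\textbf{a})}\sqcup\textbf{T}_{A(\textbf{a})}$ over the boundary, Lemma \ref{coverlem} would not apply, and the argument would collapse. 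Your fallback of extending the base-dual class $f\in H^1(\textbf{T}_{A(\textbf{a})};\ZZ/2)$ over $W$ hits the same wall: since $H^1(W;\ZZ/2)\cong\mathrm{Hom}(H_1(W;\ZZ),\ZZ/2)$, a class restricting to $f$ is exactly a homomorphism $\beta$ with $\beta\circ i_*=f$, and if, say, $i_*[\gamma]=2g$ then $\beta$ would have to satisfy $\beta(2g)=1$, which is impossible. So some integral (or mod $2$) statement strictly stronger than $k\neq0$ is unavoidable, and your sketch never produces it.

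The missing observation is that the no-3-handle hypothesis gives surjectivity of $i_*:H_1(\textbf{T}_{A(\textbf{a})};\ZZ)\to H_1(W;\ZZ)$ \emph{integrally}: turning the handle decomposition upside down, $W$ is built on $\partial W\times I$ using handles of index at least $2$ only, so $H_1(W,\partial W;\ZZ)=0$. Since torsion maps to torsion, and $H_1(\textbf{T}_{A(\textbf{a})};\ZZ)$ is generated by $[\gamma]$ together with torsion classes (the meridians of the chain link), integral surjectivity forces $i_*[\gamma]$ to generate $H_1(W;\ZZ)/\mathrm{Tor}\cong\ZZ$; that is, $k=\pm1$, which is odd. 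Equivalently, as in the paper, one chooses a basis $\{i_*[\gamma],m_1,\ldots,m_r\}$ of $H_1(W;\ZZ)$ with each $m_i$ torsion, defines $\phi:H_1(W;\ZZ)\to\ZZ_2$ by $\phi(i_*[\gamma])=1$ and $\phi(m_i)=0$, and checks on generators that $\phi\circ i_*=f$, so the cover classified by $\phi$ restricts over the boundary to $\textbf{T}_{A(\textbf{a})^2}$. (The same conclusion also follows along the mod-$2$ route you gesture at, once one states it sharply: no 3-handles gives $H_1(W,\partial W;\ZZ/2)=0$, hence mod-$2$ surjectivity of $i_*$, and since the reductions of the torsion generators die under projection to the free factor, surjectivity forces $[\gamma]\mapsto 1$.) With this one upgrade from rational to integral surjectivity, your argument is complete and is, in substance, identical to the paper's.
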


\begin{proof}
	 Suppose that $\textbf{T}_{A(\textbf{a})}$ bounds a $\QQ S^1\times B^3$, $W$, admitting a handlebody decomposition without 3-handles. Consider the obvious surgery diagram of $\textbf{T}_{ A(\textbf{a})}$ as in Figure \ref{hypsurgery}. Let $\mu_i$ denote the homology class of the meridian of the $-a_i$-framed surgery curve and let $\mu_0$ denote the homology class of the meridian of the 0-framed surgery curve. Then $H_1(\textbf{T}_{A(\textbf{a})};\ZZ)$ is generated by $\mu_0,\ldots,\mu_n$. Consider the torus bundle $\textbf{T}_{ A(\textbf{a})^2}$, which has monodromy $(T^{-a_1}S\cdots T^{-a_n}S)^2$. There is an obvious $\ZZ_2$-action on $\textbf{T}_{ A(\textbf{a})^2}$; in the obvious surgery diagram of $\textbf{T}_{ A(\textbf{a})^2}$, rotate the chain link through the 0-framed unknot $180^{\circ}$ (c.f. Figure \ref{involution}). The quotient of $\textbf{T}_{ A(\textbf{a})^2}$ by this action is clearly $\textbf{T}_{A(\textbf{a})}$ and the induced map $f:H_1(\textbf{T}_{ A(\textbf{a})};\ZZ)\to \ZZ_2$ satisfies $f(\mu_0)=1$ and $f(\mu_i)=0$ for all $1\le i\le n$. 
	  
	  Let $i_*:H_1(\textbf{T}_{ A(\textbf{a})};\ZZ)\to H_1(W;\ZZ)$ be the map induced by inclusion. Since $W$ has no 3-handles, $i_*$ is surjective; hence there exists a basis for $H_1(W;\ZZ)$ of the form $\{m_0,m_1,\ldots,m_k\}$, where $m_0:=i_*(\mu_0)$ and $m_i$ is a torsion element for all $1\le i\le k$. Define $g:H_1(W;\ZZ)\to\ZZ_2$ by $g(m_0)=1$ and $g(m_i)=0$ for all $1\le i\le k$. Then $g$ is a surjective homomorphism satisfying $f=g\circ i_*$. Let $\widetilde{W}$ be the double cover of $W$ induced by $g$. Then $\partial \widetilde{W}=\textbf{T}_{A(\textbf{a})^2}$ and by Lemma \ref{coverlem}, $\widetilde{W}$ is a $\QQ S^1\times B^3$. But by Corollary \ref{squarenotsquare}, $|\text{Tor}(H_1(\textbf{T}_{(A(\textbf{a}))^2};\ZZ))|$ is not a square, which contradicts Lemma \ref{squarelem}. Thus $W$ must contain 3-handles. 
 \end{proof}

\begin{remark} The proof of Proposition \ref{3handlesprop} also holds for negative hyperbolic torus bundles. We left this case out to simplify the notation. In \cite{simone4}, it is shown that no negative hyperbolic torus bundle bounds a $\QQ S^1\times B^3$, so consideration of that case would be meaningless. \end{remark}

\section{Constructing plumbed 3-manifolds that bound rational homology circles}\label{qcircles}

In this section, we develop a method for constructing many plumbed 3-manifolds that bound $\QQ S^1\times B^3s$. Throughout this section, we will use the same notation to denote a plumbing and its associated graph. We begin by reviewing a useful construction by Aceto.

\begin{definition}[Aceto \cite{aceto}] Let $X_i$ be a plumbing tree with a distinguished vertex $v_i$, for $i=1,2$. Let $X$ be the plumbing tree obtained from $X_1$ and $X_2$ by identifying the two distinguished vertices and taking the sum of the corresponding weights to be the new weight (See Figure \ref{join}). We say that $X$ is obtained by \textit{joining} together $X_1$ and $X_2$ along $v_1$ and $v_2$ and we write $X=X_1\bigvee_{v_1=v_2}X_2$. We call this operation the \textit{join operation}.\end{definition}

 \begin{figure}[h!]
\centering
\begin{subfigure}[h]{.4\textwidth}
\centering
\includegraphics[scale=.45]{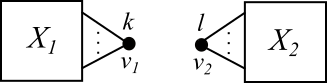}
\caption{Two plumbing trees $(X_i,v_i)$}\label{prejoin}
\end{subfigure}
\begin{subfigure}[h]{.4\textwidth}
\centering
\includegraphics[scale=.45]{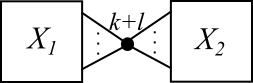}
\caption{The plumbing $X_1\bigvee_{v_1=v_2}X_2$}\label{postjoin} 
\end{subfigure}
\caption{Applying the join operation to the vertices $v_1$ and $v_2$}\label{join}
\end{figure}

On the 4-manifold level, consider the obvious handlebody diagram of $X_1\natural X_2$. Let $K_i$ denote the unknot to which the 2-handle associated with the vertex $v_i$ is attached. Let $U$ be an unknot such that $lk(K_i,U)=1$ for $i=1,2$ and such that there exists a sphere surrounding $U$ that intersects the handlebody diagram of $X_1\natural X_2$ in exactly four points, namely two points on $K_1$ and two points on $K_2$ (See Figure \ref{handlebodyjoin1}). Now attach a 0-framed 2-handle along $U$. By sliding $K_2$ over $K_1$, surgering $U$ into a dotted circle, and performing a handle cancellation, we obtain the obvious handlebody diagram of $X$. See Figure \ref{handlebodyjoin1a}). The following result gives a way to construct plumbed 3-manifolds whose graphs are trees that bound $\mathbb{Q}S^1\times B^3$s.

\begin{figure}[h]
\centering
\begin{subfigure}{\textwidth}
\centering
\captionsetup{width=.8\linewidth}
\includegraphics[scale=.45]{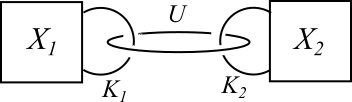}
\caption{Joining two plumbing trees to obtain $X=X_1\bigvee_{v_1=v_2}X_2$}\label{handlebodyjoin1}
\vspace{.3cm}
\end{subfigure}
\begin{subfigure}{\textwidth}
\centering
\captionsetup{width=.9\linewidth}
\includegraphics[scale=.45]{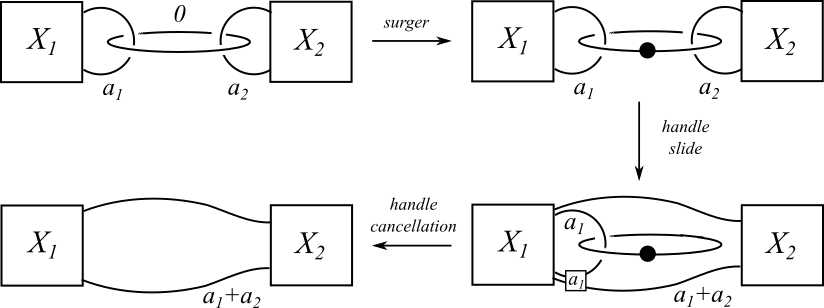}
\caption{Using Kirby calculus to obtain the obvious handlebody diagram of $X$.}\label{handlebodyjoin1a}
\end{subfigure}
\caption{Obtaining handlebody descriptions of the join operation}\label{handlebodies}
\end{figure}

\begin{prop}[Aceto \cite{aceto}] Let $(X, v)$ be a plumbing tree with distinguished vertex $v$ such that $\partial X=S^1\times S^2$ and $\partial (X\backslash v)$ is a $\mathbb{Q}S^3$. Let $(X',v')$ be a plumbing tree with distinguished vertex $v'$ such that $\partial X'$ is a $\mathbb{Q}S^1\times S^2$. If $\partial X'$ bounds a $\mathbb{Q}S^1\times B^3$, then so does $\partial (X \bigvee_{v=v'} X').$\label{acetoprop}\end{prop}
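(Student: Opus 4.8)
The plan is to build the required $\QQ S^1\times B^3$ by taking Aceto's handlebody model of the join (Figure \ref{handlebodies}) and replacing the summand $X'$ by the filling $W'$. Recall that $X\bigvee_{v=v'}X'$ is obtained from the boundary connected sum $X\natural X'$ by attaching a $0$-framed $2$-handle along an unknot $U$ with $lk(U,K_v)=lk(U,K_{v'})=1$, surgering $U$ into a dotted circle, and cancelling it against $K_v$. Since $\partial W'=\partial X'$, I would instead form $W_0:=(X\natural W')\cup \hat h_U$, where $\hat h_U$ is the dotted ($1$-handle) version of $U$. The framed attaching data of $U$ lives in $\partial X\#\partial X'=\partial X\#\partial W'=\partial(X\natural W')$, so it transfers verbatim; and because handle attachment and the surgery turning $U$ into a dotted circle depend only on boundary data (the latter being an interior modification), $\partial W_0=\partial\big((X\natural X')\cup h_U\big)=\partial\big(X\bigvee_{v=v'}X'\big)$. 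Thus $W_0$ already fills the correct $3$-manifold, and the entire problem becomes homological.

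It then remains to arrange that the filling has the rational homology of $S^1$. I would run the long exact sequence of the pair $(W,W')$ (equivalently Mayer--Vietoris for $W_0$). Rationally $W'$ contributes only $H_1=\QQ$, with the inclusion $H_1(\partial W';\QQ)\to H_1(W';\QQ)$ an isomorphism; the piece $X$ contributes $H_2=\QQ^{n}$ with $n=|V(X)|$; and the cancelling pair $(\hat h_U,K_v)$ keeps $b_1$ equal to $1$. The two hypotheses on $X$ are exactly the inputs that close the bookkeeping: $\partial(X\setminus v)=\QQ S^3$ says $Q_{X\setminus v}$ is nondegenerate, which pins the rank of the classes carried by the handles of $X\setminus v$, while $\partial X=S^1\times S^2$ says $Q_X$ has nullity one, so a single radical direction survives and is matched, through the linking circle $U$, against the $\QQ$ that $W'$ supplies to $H_1$. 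A short Euler-characteristic count shows $\chi(W_0)=n-1$, so for $n\ge2$ the naive construction overshoots; this reflects the phenomenon recorded in Theorem \ref{hypthm}, and one removes the surplus by attaching $n-1$ cancelling $3$-handles. If these succeed in killing the surviving $H_2$ without altering $\partial W_0$, the result $W$ has $b_1(W)=1$, $b_2(W)=b_3(W)=0$, i.e. $H_*(W;\QQ)\cong H_*(S^1;\QQ)$.

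I expect the genuine difficulty to be concentrated in that last step, and to be twofold. First, $W'$ is not presented with $K_{v'}$ as a $2$-handle, so the relation $lk(U,K_{v'})=1$ must be reinterpreted \emph{intrinsically}: $U$ should be taken to pair nontrivially with the image of $[K_{v'}]$ under $H_1(\partial X';\ZZ)\to H_1(W';\ZZ)$, and one must verify that this image generates the free part that $W'$ contributes. Second---the crux---one must show that the second homology coming from $X$ is carried by embedded $2$-spheres in $\partial W_0$ along which the $3$-handles can be attached, so that the surplus dies rationally and the boundary is unchanged. Establishing this, equivalently that the nondegenerate block $Q_{X\setminus v}$ is absorbed once the nullity-one radical direction of $Q_X$ is capped by the $S^1$ of $W'$, is where both boundary conditions on $X$ are used simultaneously and is the point I would expect to demand the most care.
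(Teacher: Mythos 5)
Your proposal founders at the step you yourself flag as the crux, and the gap is not repairable along the lines you suggest. By keeping the plumbing $X$ as part of the filling, you create $W_0=(X\natural W')\cup \hat h_U$ with $b_2(W_0;\QQ)=n=|V(X)|$, and your plan to remove this surplus with 3-handles ``without altering $\partial W_0$'' is impossible for two independent reasons. First, a 3-handle lowers $H_2(\,\cdot\,;\QQ)$ only if its attaching sphere is homologically essential in the boundary, and cutting a closed oriented 3-manifold along an essential embedded sphere and capping with balls never returns the same manifold (it disconnects it or drops $b_1$); since $\partial W_0$ is already the 3-manifold you must end with, every homologically useful 3-handle ruins the boundary. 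Second, even ignoring embeddedness and the boundary, the classes a layer of 3-handles can kill lie in the image of $H_2(\partial W_0;\QQ)\to H_2(W_0;\QQ)$, and $\partial W_0$ is a $\QQ S^1\times S^2$, so this image has rank at most one: no number of 3-handles can kill $\QQ^n$ for $n\ge 2$. The appeal to Theorem \ref{hypthm} is a red herring (that theorem says certain fillings must contain 3-handles; it does not let you attach 3-handles to a fixed filling for free), and there are smaller problems as well: the dotted-circle version of $\hat h_U$ requires $U$ to bound a properly embedded disk in $X\natural W'$, which is unavailable once the 2-handle at $v'$ (whose cocore $\mu_{v'}$ used to bound) is replaced by $W'$; and reading $\partial X=S^1\times S^2$ merely as ``$Q_X$ has nullity one'' discards exactly the content the proof needs, namely that $\partial X$ honestly bounds a $\QQ S^1\times B^3$.

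The workable proof---this proposition is quoted from Aceto, but the paper's proof of the self-join analogue, Proposition \ref{constructionprop}, is the same template---discards $X$ as a 4-manifold entirely. From the handle picture (Figure \ref{handlebodyjoin1}), $\partial\bigl(X\bigvee_{v=v'}X'\bigr)$ is $0$-surgery on $U\subset \partial X\#\partial X'$, where $[U]=[\mu_v]+[\mu_{v'}]$ is a band sum of meridians. Since $\partial X=S^1\times S^2$ on the nose, the 3-manifold $\partial X\#\partial X'$ bounds $V:=(S^1\times B^3)\natural W'$, a rational homology $\natural^2(S^1\times B^3)$; the candidate filling is $Z:=V\cup h_U$, and now $\chi(Z)=0$ automatically, so there is no surplus to kill. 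The hypothesis that $\partial(X\setminus v)$ is a $\QQ S^3$ enters exactly once: adding a $0$-framed meridian $\mu_v$ to the surgery curve $K_v$ and cancelling shows that $0$-surgery on $\mu_v$ inside $\partial X$ yields $\partial(X\setminus v)$, so if $[\mu_v]$ were torsion in $H_1(\partial X)\cong\ZZ$ the surgered manifold would have $b_1\ge 1$; hence $[\mu_v]$ has infinite order. Therefore $[U]$ maps to a class in $H_1(V;\QQ)\cong\QQ^2$ with nonzero $S^1\times B^3$-coordinate, so $H_1(Z;\QQ)\cong\QQ$; combined with $\chi(Z)=0$ and $H_3(Z;\QQ)=H_4(Z;\QQ)=0$ this forces $H_2(Z;\QQ)=0$, i.e.\ $Z$ is a $\QQ S^1\times B^3$ with $\partial Z=\partial\bigl(X\bigvee_{v=v'}X'\bigr)$. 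This is precisely the mechanism of Lemma \ref{rationalballobstruction} and Proposition \ref{constructionprop}: attach a single 2-handle to a filling of the boundary, never to the plumbing itself.
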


We now define the self-join operation, which will allow us to construct plumbed 3-manifolds with cycles. This was defined in the introduction, but for convenience, we recall it here.\\

\noindent\textbf{Definition \ref{definition}.} Let $X$ be a plumbing whose associated graph is a tree and let $v_1$ and $v_2$ be distinguished vertices.  Let $X_{v_1=\pm v_2}$ be the positive/negative plumbing obtained by identifying $v_1$ and $v_2$ and taking the sum of the corresponding weights to be the weight of the new vertex. We say that $X_{v_1=\pm v_2}$ is obtained from $X$ by \textit{self-joining} $X$ along $v_1$ and $v_2$.\\

On the 4-manifold level, we can once again consider the obvious handlebody diagram of $X$. Orient the attaching circles of the 2-handles so that all linking numbers of all adjacent unknots are $+1$. Let $K_i$ denote the unknot to which the 2-handle associated with the vertex $v_i$ is attached. Consider the obvious handlebody diagram for $X\natural(S^1\times B^3)$ obtained by adding a 1-handle to $X$. Now, as above, we can obtain $X_{v_1=\pm v_2}$ from $X\natural(S^1\times B^3)$ by adding a particular 0-framed 2-handle. Let $U_{\mp}$ be an unknot such that: $lk(K_1,U_{\mp})=1$ and $lk(K_2,U_{\mp})=\mp1$; there exists a sphere surrounding $U_{\mp}$ that intersects the handlebody diagram of $X\natural(S^1\times B^3)$ in precisely four points, namely two points on $K_1$ and two points on $K_2$; and $U_{\mp}$ ``passes through" the 1-handle (see Figure \ref{handlebodyjoin2}). Now attach a 0-framed 2-handle along $U_{\mp}$. As in the case with trees, by sliding $K_2$ over $K_1$, surgering $U_{\mp}$ into a dotted circle, and performing a handle cancellation, we obtain the obvious handlebody diagram of $X_{v_1=\pm v_2}$ (see Figures \ref{handlebodyjoin2a} and \ref{handlebodyjoin2b}). The following gives us a way to construct plumbed 3-manifolds with a single cycle that bound $\mathbb{Q}S^1\times B^3$s.

\begin{figure}[h]
\centering
\begin{subfigure}{.32\textwidth}
\centering
\captionsetup{width=.9\linewidth}
\includegraphics[scale=.45]{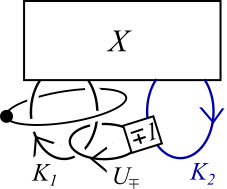}
\caption{Self-joining a plumbing tree to obtain $X_{v_1=\pm v_2}$}\label{handlebodyjoin2}
\end{subfigure}
\begin{subfigure}{.32\textwidth}
\centering
\captionsetup{width=.9\linewidth}
\includegraphics[scale=.45]{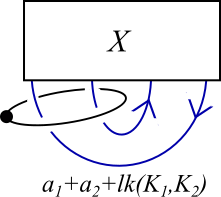}
\caption{Handlebody diagram of $X_{v_1=v_2}$}\label{handlebodyjoin2a}
\end{subfigure}
\begin{subfigure}{.32\textwidth}
\centering
\captionsetup{width=.9\linewidth}
\includegraphics[scale=.45]{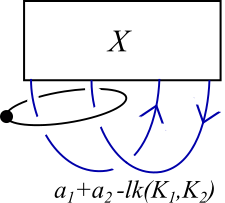}
\caption{Handlebody diagram of $X_{v_1=-v_2}$}\label{handlebodyjoin2b}
\end{subfigure}
\caption{Obtaining handlebody descriptions of the self-join operation}\label{handlebodies}
\end{figure}

\vspace{.3cm}

\noindent\textbf{Proposition \ref{constructionprop}.} \textit{Let $X$ be a plumbing tree such that $Y=\partial X$ bounds a $\mathbb{Q}S^1\times B^3$. Let $v_1$ and $v_2$ be distinct vertices of $X$ and let $Q_{\pm}$ denote the intersection form of the plumbing $X_{v_1=\pm v_2}$. If $\det Q_{\pm}\neq0$, then $\partial (X_{v_1=\pm v_2})$ bounds a $\mathbb{Q}S^1\times B^3$.}

\begin{proof}
Let $W$ be a $\QQ S^1\times B^3$ bounded by $Y$. First consider $X'=X\natural(S^1\times B^3)$ and note that $Y'=\partial X'=Y\#(S^1\times S^2)$ bounds a rational homology $(S^1\times B^3)\natural(S^1\times B^3)$, namely $W'=W\natural(S^1\times B^3)$. Consider the obvious surgery diagram of $Y'$ obtained as the boundary of the handlebody diagram of $X'$. By attaching a 0-framed 2-handle to $W'$ along an unknot $U_{\mp}$, as described in the paragraph preceding the statement of this proposition, we will obtain a 4-manifold $Z_{\pm}$ with boundary $\partial (X_{v_1=\pm v_2})$. We claim that $Z_{\pm}$ is a $\QQ S^1\times B^3$.

Since $Z_{\pm}$ is obtained by attaching a 1-handle and 2-handle to $W$, which is a $\QQ S^1\times B^3$, it follows that $\text{rank}H_1(Z_{\pm};\QQ)\in\{1,2\},$ $H_3(Z_{\pm};\QQ)=0$, and $\chi(Z_{\pm})=\chi(W)=0$. Thus, if we can show that $H_1(Z_{\pm};\QQ)=\QQ$, then since $\chi(Z_{\pm})=0$, it will follow that $H_2(Z_{\pm};\QQ)=0$, implying that $Z_{\pm}$ is a $\QQ S^1 \times B^3$.
	
Consider the obvious handlebody diagram of $X_{v_1=\pm v_2}$ as in Figure \ref{handlebodies}. Since the 1-handle has zero linking number with the attaching circles of every 2-handle, $Q_{\pm}$ is simply the linking matrix of the attaching circles of the 2-handles. Consider the obvious surgery diagram for $\partial (X_{v_1=\pm v_2})$ inherited from the plumbing $X_{v_1=\pm v_2}$. It is clear that $\partial (X_{v_1=\pm v_2})$ has linking matrix of the form $\begin{bmatrix}0&0\\0&Q_{\pm}\end{bmatrix}$ and $H_1(\partial (X_{v_1=\pm v_2});\ZZ)=\ZZ\oplus A$ for some abelian group $A$. Since $\det Q_{\pm}\neq0$, $A$ is necessarily finite. Now, since $H_1(W,Y;\QQ)=0$, it follows from the long exact sequence of the pair that $\text{rank}H_1(Z_{\pm};\QQ)=1$.
\end{proof}

\begin{example} 
\normalfont
 Let $X$ be the linear plumbing of length $n$ shown in Figure \ref{linear} and let $v_1$ and $v_n$ be the first and last vertices. Then $\partial X=S^1\times S^2$ and $X_{v_1=- v_2}$ is a cyclic plumbing whose boundary is $\textbf{T}_n$, the parabolic torus bundle with monodromy $-T^n$ (c.f. Figure \ref{par1}). Let $Q$ be the intersection form of $X_{v_1=-v_2}$. Then it is an $n\times n$ matrix of the form 
$$Q=\begin{bmatrix}
-2 & 1 & 0  &\cdots & 0&  -1 \\
1 & -2 & 1 &\cdots & 0& 0\\
\vdots & & \ddots & & & \vdots \\
\vdots & & & \ddots  & & \vdots \\
0 & 0 &\cdots & 1 & -2 & 1\\
-1 & 0 &  \cdots & 0 & 1 & -2
\end{bmatrix}$$
A quick calculation shows that, $|\det Q|=4$ (this can also be computed using Lemma \ref{order}). Thus by Proposition \ref{constructionprop}, parabolic torus bundles with monodromy $-T^n$ bound $\QQ S^1\times B^3s$ for all $n$. This gives an alternate proof of Lemma \ref{parlem}. %Moreover, note that we cannot use Proposition \ref{parlem} to deduce that $\partial X_{v_1=v_2}$ bounds a $\QQ S^1\times B^3$, since the determinant of the matrix for the intersection form of $X_{v_1=v_2}$ is 0. In fact, $\partial X_{v_1=v_2}$ is the parabolic torus bundle with monodromy $T^n$, which does not bound a $\QQ S^1\times B^3$ since it is not even a $\QQ S^1\times S^2$.
 
 \begin{figure}
 	\centering
 	\begin{subfigure}{.45\textwidth}
 	\centering
 	\includegraphics[scale=.45]{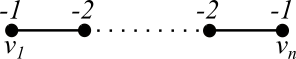}
 	\caption{Linear plumbing with boundary $S^1\times S^2$}\label{linear}
 	\end{subfigure}
 	\begin{subfigure}{.45\textwidth}
 	\centering
 	\includegraphics[scale=.45]{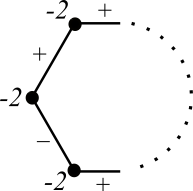}
 	\caption{Cyclic plumbing with boundary $\textbf{T}_n$}
 	\label{par}
 \end{subfigure}
\caption{Performing the self-join operation on $S^1\times S^2$ to obtain a negative parabolic torus bundle}
 \end{figure}
 
 The hyperbolic torus bundles $\textbf{T}_{A(\textbf{a})}$ of Theorem \ref{hypthm}, on the other hand, cannot be obtained using Proposition \ref{constructionprop}. This can be seen as follows. Suppose $\textbf{T}_{A(\textbf{a})}$ can be obtained by Proposition \ref{constructionprop} and let $\textbf{a}=(a_1,\ldots,a_n)$. Then we can undo the self-join operation to obtain a linear plumbed 3-manifold with the rational homology of $S^1\times S^2$ (in fact, since linear plumbed 3-manifolds are lens spaces, the only possibility is $S^1\times S^2$).
 Since $a_i\ge 2$ for all $i$ and $a_j\ge 3$ for some $j$, every internal vertex of the linear plumbing has weight at most $-2$ and either: there exists an internal vertex with weight at most $-3$; or one of the end vertices has weight at most $-2$. It is clear that such plumbed 3-manifolds are necessarily $\QQ S^3s$.
 \label{example1}
\end{example}

As the next example shows, we can also construct plumbed 3-manifolds bounding $\QQ S^1\times B^3s$ whose associated graphs are not cyclic.

\begin{example} 
\normalfont
 Consider the two plumbings $X_1$ and $X_2$ with distinguished vertices $v_1$ and $v_2$ depicted in Figure \ref{2trees}. Notice that $\partial X_1=\partial X_2=S^1\times S^2$ and $\partial(X_1\backslash v_1)=L(2,1)\#L(2,1)$ is a $\mathbb{Q}S^3$. Thus by Proposition \ref{acetoprop}, $\partial X=\partial (X_1\bigvee_{v_1=v_2}X_2)$ bounds a $\mathbb{Q}S^1\times B^3$. Now glue together the distinguished vertices $w_1$ and  $w_2$ of $X$ depicted in Figure \ref{1tree} to form $X_{w_1=\pm w_2}$ as shown in Figure \ref{acycle0}. Let $Q_{\pm}$ be the intersection form of $X_{w_1=\pm w_2}$. Calculations as in Example \ref{example1} show that $\det Q_{\pm}\neq 0$. Thus, by Proposition \ref{constructionprop}, the plumbed 3-manifold $\partial X_{w_1=\pm w_2}$ bounds a $\mathbb{Q}S^1\times B^3$.
 \label{example2}
\end{example}

\begin{figure}[h!]
\centering
\begin{subfigure}{\textwidth}
	\centering
	\includegraphics[scale=.5]{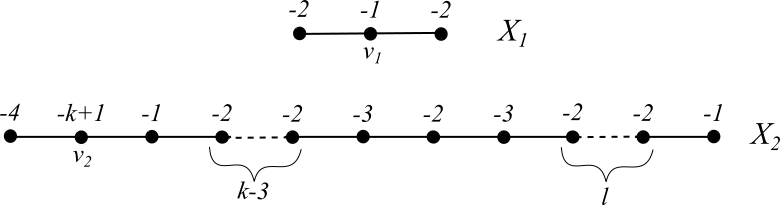}
	\caption{Two plumbing trees $(X_i,v_i)$}\label{2trees}
\end{subfigure}
\begin{subfigure}{\textwidth}
	\centering
	\captionsetup{width=.8\linewidth}
	\includegraphics[scale=.5]{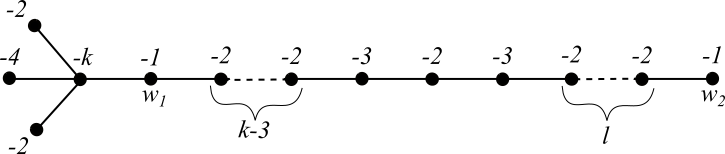}
	\caption{$X=X_1\bigvee_{v_1=v_2}X_2$. Consider the two distinguished vertices $w_1$ and $w_2$.}\label{1tree}
\end{subfigure}
\begin{subfigure}{\textwidth}
	\centering
	\captionsetup{width=.8\linewidth}
	\includegraphics[scale=.5]{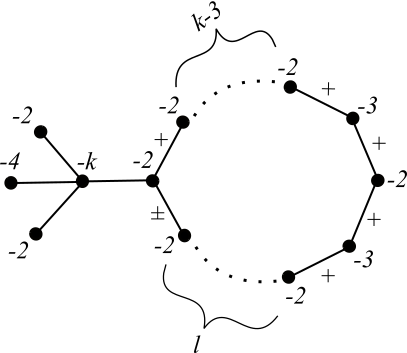}
	\caption{Self-joining the plumbing tree $X$ to obtain $X_{w_1=\pm w_2}$}\label{acycle0}
\end{subfigure}
\caption{Constructing a plumbed 3-manifold that bounds a $\QQ S^1\times B^3$.}\label{joinoperation}
\end{figure}

\bibliographystyle{plain}
\bibliography{Bibliography}

\end{document}